\def\rF{\mathbb{F}}
\def\R{\mathbb{R}}
\def\argmin{\mathop{\rm arg\, min}}
\def\cE{\mathbb{E}}
\def\B{{\mathcal B}}
\def\P{{\mathcal P}}
\def\Q{{\mathcal Q}}
\def\sPr{{\mathsf{Pr}}}
\def\sX{{\mathsf X}}
\def\sY{{\mathsf Y}}
\def\sZ{{\mathsf Z}}
\def\sH{{\mathsf H}}
\def\sA{{\mathsf A}}
\def\sE{{\mathsf E}}
\newtheorem{theorem}{Theorem}[section]
\newtheorem{lemma}{Lemma}[section]
\theoremstyle{definition}
\newtheorem{definition}{Definition}[section]
\newtheorem{remark}{Remark}[section]
\newtheorem{example}{Example}[section]
\newtheorem{assumption}{Assumption}[section]
\begin{document}

\begin{frontmatter}

\title{Near Optimality of Quantized Policies in Stochastic Control Under Weak Continuity Conditions\tnoteref{label0}}
\tnotetext[label0]{This research was supported in part by the Natural Sciences and Engineering Research Council (NSERC) of Canada. Parts of this work were presented at 53rd IEEE Conference on Decision and Control (CDC), Los Angeles, CA, 2014.}

\author{Naci Saldi\corref{cor}}
\ead{nsaldi@mast.queensu.ca}
\cortext[cor]{Corresponding author}

\author{Serdar Y\"uksel}
\ead{yuksel@mast.queensu.ca}

\author[label1]{Tam\'{a}s Linder}
\address[label1]{Department of Mathematics and Statistics, Queen's University, Kingston, ON, Canada.}
\ead{linder@mast.queensu.ca}

\begin{abstract}
This paper studies the approximation of optimal control policies by quantized (discretized) policies for a very general class of Markov decision processes (MDPs). The problem is motivated by applications in networked control systems, computational methods for MDPs, and learning algorithms for MDPs. We consider the finite-action approximation of stationary policies for a discrete-time Markov decision process with discounted and average costs under a weak continuity assumption on the transition probability, which is a significant relaxation of conditions required in earlier literature. The discretization is constructive, and quantized policies are shown to approximate optimal deterministic stationary policies with arbitrary precision. The results are applied to the fully observed reduction of a partially observed Markov decision process, where weak continuity is a much more reasonable assumption than more stringent conditions such as strong continuity or continuity in total variation.
\end{abstract}

\begin{keyword}
Stochastic control \sep quantization \sep approximation \sep partially observed Markov decision processes
\end{keyword}

\end{frontmatter}

\section{Introduction}
\label{sec0}

In this paper, we study the finite-action approximation of optimal control policies for a discrete time Markov decision process (MDP) with Borel state and action spaces, under discounted and average cost criteria. Various stochastic control problems may benefit from such an investigation.

The optimal information transmission problem in networked control systems is one such example. In many applications to networked control, the perfect transmission of the control actions to an actuator is infeasible when there is a communication channel of finite capacity between a controller and an actuator. Hence, the actions of the controller must be quantized to facilitate reliable transmission to an actuator. Although, the problem of optimal information transmission from a plant/sensor to a controller has been studied extensively \cite{YuBa13}, much less is known about the problem of transmitting actions from a controller to an actuator. Such transmission schemes usually require a simple encoding/decoding rule since an actuator does not have the computational/intelligence capability of a controller to use complex algorithms. Therefore, time-invariant uniform quantization is a practically useful encoding rule for controller-actuator communication.

The investigation of the finite-action approximation problem is also useful in computing near optimal policies and learning algorithms for MDPs. In a recent work \cite{SaYuLi14}, we consider the development of \emph{finite-state} approximations for obtaining near optimal policies. However, to establish constructive control schemes, one needs to quantize the action space as well. Thus, results on approximate optimality of finite-action models paves the way for practical computation algorithms which are commonly used for finite-state/action MDPs. One other application regarding approximation problems is on learning a controlled Markov chain using simulations. If one can ensure that learning a control model with only finitely many control actions is sufficient for approximate optimality, then it is easier to develop efficient learning algorithms which allow for the approximate computation of finitely many transition probabilities. In particular, results developed  in the learning and information theory literature for conditional kernel estimations \cite{LaKo07} (with control-free models) can be applied to transition probability estimation for MDPs.

Motivated as above, in this paper we investigate the following approximation problem: For uncountable Borel state and action spaces, under what conditions can the optimal performance (achieved by some optimal stationary policy) be arbitrarily well approximated if the controller action set is restricted to be finite? We show that quantized stationary policies obtained by uniform quantization of the action space can approximate optimal policies with arbitrary precision for an MDP with an unbounded one-stage cost function, under a weak continuity assumption on the transition probability.

Various approximation results, which are somewhat related to our work, have been established for MDPs with Borel state and action spaces in the literature along the theme of computation of near optimal policies. With the exception of \cite{Lan81}, these works assume in general restrictive continuity conditions on the transition probability. In \cite{Lan81}, the authors considered an approximation problem in which all the components of the original model are \emph{allowed} to vary in the reduced model (varying only the action space corresponds to the setup considered in this paper). Under weak continuity of the transition probability, \cite{Lan81} established the convergence of the reduced models to the original model for the discounted cost when the one-stage cost function is bounded. In this paper we allow the one-stage cost function to be unbounded. In addition, we also study the approximation problem for the challenging average cost case. Hence, our results can be applied to a wider range of stochastic systems. However, analogous with \cite{Lan81}, the price we pay for imposing weaker assumptions is that we do not obtain explicit performance bounds in terms of the rate of the quantizer used in the approximations.

In \cite{SaLiYu13-2} we solved a variant of this problem for the discounted cost under the folowing assumptions: (i) the action space is compact, (ii) the transition probability is setwise continuous in the action variable, and (iii) the one stage cost function is bounded and continuous in the action variable. The average cost was also considered under some additional restrictions on the ergodicity properties of Markov chains induced by deterministic stationary policies. In this paper we consider a similar problem for systems where the transition probability is weakly continuous in the state-action variables. An important motivation for considering these conditions comes from the fact that for the fully observed reduction of a partially observed MDP (POMDP), the setwise continuity of the transition probability in the action variable is a prohibitive condition even for simple systems such as the one described in Example~\ref{exm1} in the next section.

\emph{Organization:} In Section~\ref{sec1} we give definitions and the problem formulation. The main result for discounted cost is stated and proved in Section~\ref{sec2}. In Section~\ref{sec2sub2} an analogous approximation result is obtained for the average cost criterion. In Section~\ref{sec3} the results for the discounted cost are applied to the fully observed reduction of POMDPs via appealing to results by Feinberg \emph{et al.\ }\cite{FeKaZg14}. Section~\ref{sec4} concludes the paper.

\section{Markov Decision Processes}
\label{sec1}

For a metric space $\sE$, the Borel $\sigma$-algebra is denoted by $\B(\sE)$. Given any Borel measurable function $w: \sE \rightarrow [1,\infty)$ and any real valued Borel measurable function $u$ on $\sE$, we define $w$-norm of $u$ as
\begin{align}
\|u\|_{w} \coloneqq \sup_{e\in\sE} \frac{|u(e)|}{w(e)}. \nonumber
\end{align}
Let $C_w(\sE)$ and $B_w(\sE)$ denote the space of all real valued continuous and Borel measurable functions on $\sE$ with finite $w$-norm, respectively \cite{HeLa99}. Let $\P(\sE)$ denote the set of all probability measures on $\sE$.
A sequence $\{\mu_n\}$ of measures on $\sE$ is said to converge weakly (resp., setwise) \cite{HeLa03} to a measure $\mu$ if $\int_{\sE} g(e) \mu_n(de)\rightarrow\int_{\sE} g(e) \mu(de)$ for all bounded and continuous real functions $g$ on $\sE$ (resp., for all bounded and Borel measurable real functions $g$ on $\sE$). Unless otherwise specified, the term "measurable" will refer to Borel measurability.

We consider a discrete-time Markov decision process (MDP) with \emph{state space} $\sZ$ and \emph{action space} $\sA$, where
$\sZ$ and $\sA$ are Borel spaces. In this paper, we assume that the \emph{set of admissible actions} for any $z \in \sZ$ is $\sA$. Let the \emph{stochastic kernel} $\eta(\,\cdot\,|z,a)$ be the \emph{transition probability} of the next state given that the previous state-action pair is $(z,a)$ \cite{HeLa96}. The \emph{one-stage cost} function $c$ is a measurable function from $\sZ \times \sA$ to $[0,\infty)$. The probability measure $\xi$ over $\sZ$ denotes the initial distribution of the state process.

Define the history spaces $\sH_0 = \sZ$ and
$\sH_{t}=(\sZ\times\sA)^{t}\times\sZ$, $t=1,2,\ldots$ endowed with their
product Borel $\sigma$-algebras generated by $\B(\sZ)$ and $\B(\sA)$. A
\emph{policy} is a sequence $\varphi=\{\varphi_{t}\}$ of stochastic kernels
on $\sA$ given $\sH_{t}$. The set of all policies is denoted by $\Phi$. A \emph{deterministic} policy is a
sequence $\varphi=\{\varphi_{t}\}$ of stochastic kernels on $\sA$ given $\sH_{t}$ which are realized by a
sequence of measurable functions $\{f_{t}\}$ from $\sH_{t}$ to $\sA$, i.e.,
$\varphi_{t}(\,\cdot\,|h_{t})=\delta_{f_{t}(h_{t})}(\,\cdot\,)$, where $\delta_z$ is the point mass at $z$.
Let $\rF$ denote the set of all measurable functions $f$ from $\sZ$ to $\sA$.
A \emph{deterministic stationary} policy is a constant sequence of stochastic kernels $\varphi=\{\varphi_{t}\}$ on $\sA$ given
$\sZ$ such that $\varphi_{t}(\,\cdot\,|z)=\delta_{f(z)}(\,\cdot\,)$ for all $t$ for some
$f\in\rF$. The set of deterministic stationary policies is identified with the set $\rF$.

According to the Ionescu Tulcea theorem \cite{HeLa96}, an initial distribution $\xi$ on $\sZ$ and a policy $\varphi$ define a unique probability measure $P_{\xi}^{\varphi}$ on $\sH_{\infty}=(\sZ\times\sA)^{\infty}$. The expectation with respect to $P_{\xi}^{\varphi}$ is denoted by $\cE_{\xi}^{\varphi}$.
If $\xi=\delta_z$, we write $P_{z}^{\varphi}$ and $\cE_{z}^{\varphi}$ instead of $P_{\delta_z}^{\varphi}$ and $\cE_{\delta_z}^{\varphi}$. The cost functions to be minimized in this paper are the discounted cost with a discount factor $\beta \in (0,1)$ and the average cost, respectively:
\begin{align}
J(\varphi,\xi) &= \cE_{\xi}^{\varphi}\biggl[\sum_{t=0}^{\infty}\beta^{t}c(z_{t},a_{t})\biggr], \nonumber \\
V(\varphi,\xi) &= \limsup_{T \rightarrow \infty} \frac{1}{T} \cE_{\xi}^{\varphi}\biggl[\sum_{t=0}^{T-1}c(z_{t},a_{t})\biggr]. \nonumber
\end{align}
A policy $\varphi^{*}$ is called optimal if $J(\varphi^{*},z) = \inf_{\varphi \in \Phi} J(\varphi,z)$ (or $V(\varphi^{*},z) = \inf_{\varphi \in \Phi} V(\varphi,z)$ for the average cost) for all $z \in \sZ$. It is well known that the set $\rF$ of deterministic stationary policies contains an optimal policy for a large class of infinite horizon discounted cost (see, e.g., \cite{HeLa96,FeKaZa12}) and average cost optimal control problems (see, e.g., \cite{Bor02,FeKaZa12}). In this case we say that $\rF$ is an optimal class.

\subsection{Problem Formulation}
\label{sec1sub1}

To give a precise definition of the problem we study in this paper, we first give the definition of a quantizer from the state to the action space.

\begin{definition}
A measurable function $q: \sZ \rightarrow\sA$ is called a \emph{quantizer} from
$\sZ$ to $\sA$ if the range of $q$, i.e., $q(\sZ)=\{q(z)\in\sA:z\in\sZ\}$, is
finite.
\end{definition}

The elements of $q(\sZ)$ (the possible values of $q$) are called
the \emph{levels} of $q$. The rate $R = \log_2|q(\sZ)|$ of a quantizer $q$ (approximately)
represents the number of bits needed to losslessly encode the output levels of $q$ using binary
codewords of equal length. Let $\Q$ denote the set
of all quantizers from $\sZ$ to $\sA$. A \emph{deterministic stationary quantizer} policy
is a constant sequence $\varphi=\{\varphi_{t}\}$ of stochastic kernels on $\sA$ given
$\sZ$ such that $\varphi_{t}(\,\cdot\,|z)=\delta_{q(z)}(\,\cdot\,)$ for all $t$ for
some $q\in\Q$. For any finite set $\Lambda \subset \sA$, let $\Q(\Lambda)$ denote the set of all elements in $\Q$ having range
$\Lambda$. Analogous with $\rF$, the set of all deterministic stationary quantizer policies induced by $\Q(\Lambda)$ will be identified with the set $\Q(\Lambda)$.

Our main objective is to find conditions on the components of the MDP under which there exists a sequence of finite subsets $\{\Lambda_n\}_{n\geq1}$ of $\sA$ for which the following holds:

\textbf{(P)} For any initial point $z$ we have $\lim_{n\rightarrow\infty} \min_{q \in \Q(\Lambda_n)} J(q,z)=\min_{f \in \rF} J(f,z)$
(or $\lim_{n\rightarrow\infty} \min_{q \in \Q(\Lambda_n)} V(q,z)=\min_{f\in \rF} V(f,z)$ for the average cost), provided that the set $\rF$ of deterministic stationary policies is an optimal class for the MDP.

Letting MDP$_n$ be defined as the Markov decision process having the components $\bigl\{ \sZ,\Lambda_n,\eta,c \bigr\}$, \textbf{(P)} is equivalent to stating that optimal cost of MDP$_n$ converges to the optimal cost of the original MDP.

\subsection{Setwise Continuity versus Weak Continuity}\label{compare}

Requiring that the transition probability of the system be weakly continuous in state-action variables is indeed a fairly mild assumption compared to the setwise continuity in the action variable. Indeed, the latter condition is even prohibitive for certain stochastic systems. In this section we consider two examples, the fully observed reduction of a POMDP and the additive noise model, in order to more explicitly highlight this. We refer the reader to \cite[Chapter 4]{Her89} and Section~\ref{sec3} of this paper for the basics of POMDPs.

\begin{example}\label{exm1}

Consider the system dynamics
\begin{align}
x_{t+1} &= x_t + a_t, \nonumber\\
y_t &= x_t + v_t, \nonumber
\end{align}
where $x_t \in \sX$, $y_t \in \sY$ and $a_t \in \sA$, and where $\sX$, $\sY$ and $\sA$ are the state, observation and action spaces, respectively. Here, we assume that $\sX = \sY = \sA = \R_{+}$ and the `noise' $\{v_t\}$ is a sequence of i.i.d. random variables uniformly distributed on $[0,1]$. It is easy to see that the transition probability is weakly continuous (with respect to state-action variables) and the observation channel that gives the conditional distribution of the current observation given the current state is continuous in total variation (with respect to state variable) for this POMDP. Hence, by \cite[Theorem 3.7]{FeKaZg14} the transition probability $\eta$ of the fully observed reduction of the POMDP is weakly continuous in the state-action variables. However, the same conclusion cannot be drawn for the setwise continuity of $\eta$ with respect to the action variable as shown below.

Let $z$ denote the generic state variable for the fully observed reduced MDP, where the state variables are elements of $\P(\sX)$ which is equipped with the Borel $\sigma$-algebra generated by the topology of weak convergence. If we define the function $F(z,a,y) \coloneqq \Pr\{x_{t+1} \in \,\cdot\, | z_t = z, a_t = a, y_{t+1} = y\}$ from $\P(\sX)\times\sA\times\sY$ to $\P(\sX)$ and the stochastic kernel $H(\,\cdot\, | z,a) \coloneqq \Pr\{y_{t+1} \in \,\cdot\, | z_t = z, a_t = a\}$ on $\sY$ given $\P(\sX)\times\sA$, then $\eta$ can be written as
\begin{align}
\eta(\,\cdot\,|z,a) = \int_{\sY} 1_{\{F(z,a,y) \in \,\cdot\,\}} H(dy|z,a), \nonumber
\end{align}
where $1_D$ denotes the indicator function of an event $D$ and $z_t$ denotes the posterior distribution of the state $x_t$ given the past observations, i.e.,
\begin{align}
z_t(\,\cdot\,) \coloneqq \Pr \{x_t \in \,\cdot\,| y_0,\ldots,y_t,a_0,\ldots,a_{t-1}\}.\nonumber
\end{align}
Let us set $z = \delta_0$ (point mass at $0 \in \sX$), $\{a_k\} = \{\frac{1}{k}\}$, and $a = 0$. Hence, $a_k \rightarrow a$. We show that $\eta(\,\cdot\,|z,a_k) \nrightarrow \eta(\,\cdot\,|z,a)$ setwise.

Observe that for all $k$ and $y \in \sY$, we have $F(z,a_k,y) = \delta_{\frac{1}{k}}$ and $F(z,a,y) = \delta_0$. Define the open set $O$ with respect to the weak topology in $\P(\sX)$ as
\begin{align}
O \coloneqq \bigl\{z \in \P(\sX): \bigl|\int_{\sX} g(x) \delta_1(dx) - \int_{\sX} g(x) z(dx)\bigr| < 1\bigr\}, \nonumber
\end{align}
where $g$ is the symmetric triangular function on $[-1,1]$ with $g(0) = 1$ and $g(-1) = g(1) = 0$. Observe that we have $F(z,a_k,y) \in O$ for all $k$ and $y$, but $F(z,a,y) \not\in O$ for all $y$. Hence,
\begin{align}
\eta(O|z,a_k) &\coloneqq \int_{\sY} 1_{\{F(z,a_k,y) \in O\}}  H(dy|z,a_k) = 1, \nonumber \\
\intertext{but}
\eta(O|z,a) &\coloneqq \int_{\sY} 1_{\{F(z,a,y) \in O\}}  H(dy|z,a) = 0 \nonumber
\end{align}
implying that $\eta(\,\cdot\,|z,a_k)$ does not converge to $\eta(\,\cdot\,|z,a)$ setwise. Hence, $\eta$ does not satisfy the setwise continuity assumption.
\end{example}

\begin{example}\label{exm2}
In this example we consider an additive-noise system given by
\begin{align}
z_{t+1}=F(z_{t},a_{t})+v_{t}, \text{ } t=0,1,2,\ldots \nonumber
\end{align}
where $\sZ= \R^n$ and  $\sA = \R^m$ for some $n,m\geq1$. The noise process $\{v_{t}\}$ is a sequence of independent and identically distributed (i.i.d.) random vectors. In such a system, the continuity of $F$ in $(z,a)$ (which holds for most practical systems) is sufficient to imply the weak continuity of the transition probability, and no assumptions are needed on the noise process (not even the existence of a density is required). Hence, weak continuity does not restrict the noise model and is satisfied by almost all systems in the applications, whereas other conditions, such as strong continuity or continuity in total variation distance, hold only if the noise is well behaved in addition to the continuity of $F$. For example, for setwise continuity, it is usually required that $F$ is continuous in $a$ for every $z$, the noise admits a density, and this density is continuous \cite[Example C.8]{HeLa96}.
\end{example}

\section{Near Optimality of Quantized Policies with Discounted Cost}
\label{sec2}

In this section we consider the problem \textbf{(P)} for the discounted cost. The following assumptions will be imposed for both the discounted cost and the average cost. We note that these assumptions are often used in the literature for studying discounted Markov decision processes with unbounded one-stage cost and weakly continuous transition probability.

\begin{assumption}
\label{as1}
\item [(a)] The one stage cost function $c$ is nonnegative and continuous.
\item [(b)] The stochastic kernel $\eta(\,\cdot\,|z,a)$ is weakly continuous in $(z,a) \in \sZ \times \sA$, i.e., if $(z_k,a_k) \rightarrow (z,a)$, then $\eta(\,\cdot\,|z_k,a_k) \rightarrow \eta(\,\cdot\,|z,a)$ weakly.
\item [(c)] $\sA$ is compact.
\item [(d)] There exist nonnegative real numbers $M$ and $\alpha \in [1,\frac{1}{\beta})$, and a continuous weight function $w:\sZ\rightarrow[1,\infty)$ such that for each $z \in \sZ$, we have
\begin{align}
\sup_{a \in \sA} c(z,a) &\leq M w(z), \label{eq1} \\
\sup_{a \in \sA} \int_{\sZ} w(y) \eta(dy|z,a) &\leq \alpha w(z), \label{eq2}
\end{align}
and $\int_{\sZ} w(y) \eta(dy|z,a)$ is continuous in $(z,a)$.
\end{assumption}

Let $d_{\sA}$ denote the metric on $\sA$. Since $\sA$ is compact and thus totally bounded, one can find a sequence of finite sets $\Lambda_n = \{a_{n,1},\ldots,a_{n,k_n}\} \subset \sA$ such that for all $n$,
\begin{align}
\min_{i\in\{1,\ldots,k_n\}} d_{\sA}(a,a_{n,i}) < 1/n \text{ for all } a \in \sA. \label{eq12}
\end{align}
In other words, $\Lambda_n$ is a $1/n$-net in $\sA$. In the rest of this paper, we assume that the sequence $\{\Lambda_n\}_{n\geq1}$ is fixed. To ease the notation in the sequel, let us define the mapping $\Upsilon_n : \rF \rightarrow \Q(\Lambda_n)$ by
\begin{align}
\Upsilon_n(f)(z) = \argmin_{a\in\Lambda_n} d_{\sA}(f(z),a) \label{neq6}.
\end{align}
Hence, for all $f\in\rF$, we have
\begin{align}
\sup_{z\in\sZ} d_{\sA}\bigl(\Upsilon_n(f)(z),f(z)\bigr) < 1/n \label{neq3}.
\end{align}

Define the operator $T$ on the set of real-valued measurable functions on $\sZ$ by
\begin{align}
T u(z) \coloneqq \min_{a \in \sA} \biggl[ c(z,a) + \beta \int_{\sZ} u(y) \eta(dy|z,a) \biggr] \label{eq9}.
\end{align}
In the literature $T$ is called the \emph{Bellman optimality operator}.

\begin{lemma}\label{nlemma1}
For any $u \in C_w(\sZ)$ the function
$l_u(z,a) \coloneqq \int_{\sZ} u(y) \eta(dy|z,a)$ is continuous in $(z,a)$.
\end{lemma}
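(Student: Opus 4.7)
The plan is to fix a convergent sequence $(z_k,a_k)\to (z,a)$ in $\sZ\times\sA$, write $\mu_k \coloneqq \eta(\,\cdot\,|z_k,a_k)$ and $\mu \coloneqq \eta(\,\cdot\,|z,a)$, and show $\int u\,d\mu_k \to \int u\,d\mu$. I have two ingredients: by Assumption~\ref{as1}(b), $\mu_k \to \mu$ weakly, and by Assumption~\ref{as1}(d), $\int w\,d\mu_k \to \int w\,d\mu$ (the latter integral being finite since the right-hand side of \eqref{eq2} is finite). The direct obstacle is that $u$ need not be bounded — we only know $|u(y)| \leq \|u\|_w\, w(y)$ — so weak convergence alone does not apply.

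To handle this, I would introduce a continuous cut-off. Fix $\epsilon > 0$ and, for each $N \geq 1$, choose a continuous function $\phi_N : [1,\infty) \to [0,1]$ with $\phi_N(t) = 1$ for $t \leq N$ and $\phi_N(t) = 0$ for $t \geq N+1$. Since $w$ is continuous, $\psi_N \coloneqq \phi_N \circ w$ is continuous on $\sZ$ with values in $[0,1]$, and $u\psi_N$ is then a bounded continuous function. Consequently, weak convergence yields
\begin{align}
\int_\sZ u(y)\psi_N(y)\,\mu_k(dy) \;\longrightarrow\; \int_\sZ u(y)\psi_N(y)\,\mu(dy) \quad \text{as } k \to \infty. \nonumber
\end{align}
The remaining task is to control the tail $\int u(1-\psi_N)\,d\mu_k$ and its analogue under $\mu$, uniformly in $k$, as $N \to \infty$.

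The key step — and the main technical point of the lemma — is a uniform integrability argument based on the fact that $\int w\,d\mu_k \to \int w\,d\mu$. Since $w$ is nonnegative continuous and $w \wedge N$ is bounded continuous, weak convergence gives $\int (w\wedge N)\,d\mu_k \to \int (w\wedge N)\,d\mu$ for each $N$; subtracting from the convergence of the full $w$-integrals yields
\begin{align}
\limsup_{k \to \infty} \int_\sZ (w(y) - N)^{+}\,\mu_k(dy) \;\leq\; \int_\sZ (w(y) - N)^{+}\,\mu(dy), \nonumber
\end{align}
and the right-hand side tends to $0$ as $N \to \infty$ by dominated convergence on $\mu$ (which is a finite measure with $\int w\,d\mu < \infty$). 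This produces the uniform tail bound $\sup_k \int_\sZ w(y)\,\mathbf{1}_{\{w(y) > N\}}\,\mu_k(dy) < \epsilon$ for all $N$ sufficiently large, hence
\begin{align}
\Bigl|\int_\sZ u(y)(1-\psi_N(y))\,\mu_k(dy)\Bigr| \;\leq\; \|u\|_w \int_{\{w > N\}} w(y)\,\mu_k(dy) \;<\; \|u\|_w\,\epsilon, \nonumber
\end{align}
uniformly in $k$, and similarly for $\mu$.

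Combining the bounded-continuous convergence of the truncated integrals with the uniform smallness of the tails, the standard $\epsilon/3$-type argument yields $\int u\,d\mu_k \to \int u\,d\mu$, which is exactly continuity of $l_u$ at $(z,a)$. The main obstacle throughout is the unboundedness of $u$; the structural reason the argument succeeds is the conjunction of weak convergence of $\mu_k$ with convergence of the $w$-masses, which together upgrade weak convergence to convergence on the class $C_w(\sZ)$.
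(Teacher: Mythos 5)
Your proof is correct, but it takes a genuinely different route from the paper's. The paper does not argue along sequences of measures at all: it invokes the standard fact (citing \cite[Proposition E.2]{HeLa96}) that $l_g(z,a)=\int g\,d\eta(\cdot|z,a)$ is lower semi-continuous for every nonnegative continuous $g$ whenever $\eta$ is weakly continuous, and then sandwiches $l_u$ by writing $g=\|u\|_w\,w+u$ and $v=\|u\|_w\,w-u$ (both nonnegative and continuous), so that $l_u=l_g-\|u\|_w\,l_w$ is l.s.c.\ and $l_u=\|u\|_w\,l_w-l_v$ is u.s.c., continuity of $l_w$ being exactly the last part of Assumption~\ref{as1}(d). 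Your argument instead proves the underlying fact from scratch: weak convergence of $\eta(\cdot|z_k,a_k)$ together with convergence of the $w$-masses upgrades to convergence of integrals of every function dominated by $w$, via a continuous truncation $u\psi_N$ and the uniform-integrability estimate $\lim_k\int(w-N)^+\,d\mu_k=\int(w-N)^+\,d\mu\to 0$. Both proofs use precisely the same two hypotheses (weak continuity of $\eta$ and continuity of $\int w\,d\eta(\cdot|z,a)$), and neither works with weak continuity alone. What the paper's route buys is brevity, by outsourcing the truncation/Fatou work to the cited semicontinuity result; what yours buys is a self-contained argument that makes the mechanism explicit (weak convergence plus $w$-moment convergence equals $C_w$-convergence), at the cost of a couple of routine bookkeeping steps you gloss over: passing from the $\limsup_k$ bound on $\int(w-N)^+\,d\mu_k$ to a bound uniform in $k$ requires treating the finitely many small indices separately, and relating $\int_{\{w>N\}}w\,d\mu_k$ to $\int(w-M)^+\,d\mu_k$ costs a harmless constant (e.g.\ $\int_{\{w>2M\}}w\,d\mu_k\le 2\int(w-M)^+\,d\mu_k$); neither affects correctness.
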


\begin{proof}
For any nonnegative continuous function $g$ on $\sZ$, the function $l_g(z,a) = \int_{\sZ} g(y) \eta(dy|z,a)$ is lower semi-continuous in $(z,a)$, if $\eta$ is weakly continuous (see, e.g., \cite[Proposition E.2]{HeLa96}). Define the nonnegative continuous function $g$ by letting $g = b w + u$, where $b = \|u\|_w$. Then $l_g$ is lower semi-continuous. Since $l_u = l_g - b l_w$ and $l_w$ is continuous by Assumption~\ref{as1}-(d), $l_u$ is lower semi-continuous. Analogously, define the nonnegative continuous function $v$ by letting $v = -u + b w$. Then $l_v$ is lower semi-continuous. Since $l_u = b l_w - l_v$ and $l_w$ is continuous by Assumption~\ref{as1}-(d), $l_u$ is also upper semi-continuous. Therefore, $l_u$ is continuous.
\end{proof}

\begin{lemma}\label{nlemma2}
Let $\sY$ be any of the compact sets $\sA$ or $\Lambda_n$. Define the operator $T_{\sY}$ on $B_w(\sZ)$ by letting
\begin{align}
T_{\sY}u(z) \coloneqq \min_{a\in\sY} \biggl[ c(z,a) + \beta \int_{\sZ} u(y) \eta(dy|z,a) \biggr]. \nonumber
\end{align}
Then $T_{\sY}$ maps $C_w(\sZ)$ into itself. Moreover, $C_w(\sZ)$ is closed with respect to the $w$-norm.
\end{lemma}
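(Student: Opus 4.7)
The plan has two separable parts: showing $T_{\sY}$ preserves membership in $C_w(\sZ)$, and showing $C_w(\sZ)$ is closed under the $w$-norm.

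For the first part, fix $u \in C_w(\sZ)$ and define $G(z,a) \coloneqq c(z,a) + \beta \int_{\sZ} u(y)\, \eta(dy|z,a)$. I would first argue that $G$ is jointly continuous on $\sZ \times \sY$: Assumption~\ref{as1}(a) handles $c$, and Lemma~\ref{nlemma1} (applicable because $u \in C_w(\sZ)$) gives continuity of the integral term in $(z,a)$. Then $T_{\sY} u(z) = \min_{a \in \sY} G(z,a)$; since $\sY$ (which is either $\sA$ or $\Lambda_n$) is compact, a standard Berge-type argument shows this minimum is continuous in $z$. For $\Lambda_n$ the argument is even easier: it is literally a pointwise minimum of finitely many continuous functions.

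Next I would verify the $w$-norm bound. Using Assumption~\ref{as1}(d), for each $z \in \sZ$ and $a \in \sY$,
\begin{align}
|G(z,a)| \leq c(z,a) + \beta \|u\|_w \int_{\sZ} w(y)\, \eta(dy|z,a) \leq \bigl(M + \beta \alpha \|u\|_w\bigr) w(z), \nonumber
\end{align}
so $\|T_{\sY} u\|_w \leq M + \beta \alpha \|u\|_w < \infty$, placing $T_{\sY} u$ in $C_w(\sZ)$.

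For the closedness claim, let $\{u_k\} \subset C_w(\sZ)$ with $\|u_k - u\|_w \to 0$; I would first observe that $u$ has finite $w$-norm by the triangle inequality. To get continuity of $u$, fix $z_0$ and let $z_j \to z_0$. Using
\begin{align}
|u(z_j) - u(z_0)| \leq \|u_k - u\|_w\, w(z_j) + |u_k(z_j) - u_k(z_0)| + \|u_k - u\|_w\, w(z_0), \nonumber
\end{align}
continuity of $w$ (Assumption~\ref{as1}(d)) bounds $w(z_j)$ near $w(z_0)$, so choosing $k$ large to make $\|u_k - u\|_w$ arbitrarily small and then letting $j \to \infty$ using continuity of $u_k$ yields $u(z_j) \to u(z_0)$.

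The only nonroutine step is the continuity of the minimum in the case $\sY = \sA$, which I would justify by a direct $\epsilon$-$\delta$ argument using joint continuity of $G$ and compactness of $\sA$ (pick a convergent subsequence of minimizers); everything else is a direct application of the assumptions and the previous lemma.
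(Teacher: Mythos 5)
Your proposal is correct and follows essentially the same route as the paper: continuity of $T_{\sY}u$ comes from Lemma~\ref{nlemma1} plus continuity of a minimum of a continuous function over the compact set $\sY$ (the paper simply cites \cite[Proposition 7.32]{BeSh78} where you argue it directly), and closedness of $C_w(\sZ)$ is the same uniform-convergence-on-compacts argument, phrased as a three-term estimate instead of via the compact set $\{z_0,z_1,\ldots\}$. Your explicit verification of the bound $\|T_{\sY}u\|_w \leq M + \beta\alpha\|u\|_w$ is a harmless (indeed slightly more complete) addition to what the paper leaves implicit in saying $T_{\sY}$ maps $B_w(\sZ)$ into itself.
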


\begin{proof}
Note that $T_{\sY}u(z) = \min_{a\in\sY} \bigl( c(z,a) + \beta l_u(z,a) \bigr)$. The function $l_u$ is continuous by Lemma~\ref{nlemma1}, and therefore, $T_{\sY}u$ is also continuous by \cite[Proposition 7.32]{BeSh78}. Since $T_{\sY}$ maps $B_w(\sZ)$ into itself, $T_{\sY} u \in C_w(\sZ)$.

For the second statement, let $u_n$ converge to $u$ in $w$-norm in $C_w(\sZ)$. It is sufficient to prove that $u$ is continuous. Let $z_k \rightarrow z_0$. Since $B \coloneqq \{z_0,z_1,z_2,\ldots\}$ is compact, $w$ is bounded on $B$. Therefore, $u_n \rightarrow u$ uniformly on $B$ which implies that $\lim_{k\rightarrow\infty} u(z_k) = u(z_0)$. This completes the proof.
\end{proof}

Lemma~\ref{nlemma2} implies that $T$ maps $C_w(\sZ)$ into itself. It can also be proved that $T$ is a contraction operator with modulus $\sigma \coloneqq \beta\alpha$ (see \cite[Lemma 8.5.5]{HeLa99}); that is,
\begin{align}
\|T u -  T v\|_w \leq \sigma \|u - v\|_w \text{ for all } u,v \in C_w(\sZ). \nonumber
\end{align}
Define the discounted value function $J^{*}$ by
\begin{align}
J^{*}(z) \coloneqq \inf_{\varphi \in \Phi} J(\varphi,z). \nonumber
\end{align}
The following theorem is a known result in the theory of Markov decision processes (see e.g., \cite[Section 8.5, p. 65]{HeLa99}).

\begin{theorem}
\label{thm1}
Suppose Assumption~\ref{as1} holds. Then, the value function $J^{*}$ is the unique fixed point in $C_w(\sZ)$ of the contraction operator $T$, i.e.,
\begin{align}
J^{*} = T J^{*}. \label{eq10}
\end{align}
Furthermore, a deterministic stationary policy $f^{*}$ is optimal if and only if
\begin{align}
J^{*}(z) = c(z,f^{*}(z)) + \beta \int_{\sZ} J^{*}(y) \eta(dy|z,f^{*}(z)). \label{eq11}
\end{align}
Finally, there exists a deterministic stationary policy $f^{*}$ which is optimal, so it satisfies~(\ref{eq11}).
\end{theorem}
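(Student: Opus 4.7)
The plan is to follow the standard dynamic programming argument for weighted-norm MDPs, leveraging the ingredients already assembled in Lemmas~\ref{nlemma1} and \ref{nlemma2}. First, I would verify that $(C_w(\sZ),\|\cdot\|_w)$ is a complete metric space: closedness in the larger complete space $B_w(\sZ)$ is exactly the second statement of Lemma~\ref{nlemma2}, so completeness follows. Combined with the assertion (from \cite[Lemma 8.5.5]{HeLa99}) that $T$ is a $\sigma$-contraction with $\sigma=\beta\alpha<1$ and maps $C_w(\sZ)$ into itself, the Banach fixed point theorem yields a unique $u^{*}\in C_w(\sZ)$ with $Tu^{*}=u^{*}$.

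Next I would construct an optimal selector. Fixing $u^{*}$, the map $(z,a)\mapsto c(z,a)+\beta\int u^{*}(y)\,\eta(dy|z,a)$ is continuous by Assumption~\ref{as1}-(a) and Lemma~\ref{nlemma1}. Since $\sA$ is compact, the minimum in (\ref{eq9}) is attained, and by a measurable selection theorem (e.g., \cite[Proposition 7.33]{BeSh78}) there exists $f^{*}\in\rF$ with
\begin{align}
u^{*}(z)=c(z,f^{*}(z))+\beta\int_{\sZ}u^{*}(y)\,\eta(dy|z,f^{*}(z)). \nonumber
\end{align}
Iterating this identity along the Markov chain induced by $f^{*}$ and using Assumption~\ref{as1}-(d), which gives $\cE_{z}^{f^{*}}[w(z_t)]\leq \alpha^{t}w(z)$ and hence $\beta^{t}\cE_{z}^{f^{*}}[u^{*}(z_t)]\to 0$, I would conclude $J(f^{*},z)=u^{*}(z)$, so $J^{*}(z)\leq u^{*}(z)$.

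For the reverse inequality I would take any policy $\varphi\in\Phi$ and prove by induction on $N$ that
\begin{align}
u^{*}(z)\leq\cE_{z}^{\varphi}\Bigl[\sum_{t=0}^{N-1}\beta^{t}c(z_t,a_t)\Bigr]+\beta^{N}\cE_{z}^{\varphi}[u^{*}(z_N)]. \nonumber
\end{align}
The base case is $u^{*}=Tu^{*}\leq c(z,a)+\beta\int u^{*}\,d\eta$ for every $a$; the inductive step uses the tower property and again $u^{*}=Tu^{*}$. Letting $N\to\infty$ and invoking (\ref{eq2}) to drive $\beta^{N}\cE_{z}^{\varphi}[u^{*}(z_N)]\leq \|u^{*}\|_w (\beta\alpha)^{N}w(z)\to 0$ gives $u^{*}(z)\leq J(\varphi,z)$, hence $u^{*}\leq J^{*}$. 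Combining both inequalities yields $J^{*}=u^{*}$, establishing (\ref{eq10}) and the existence of an optimal deterministic stationary policy; the if-and-only-if characterization (\ref{eq11}) then follows by applying the same iteration/limit argument to any candidate $f^{*}$ satisfying the Bellman equation.

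The main technical obstacle is the passage to the limit $N\to\infty$ in the above inequality under the unbounded-cost hypothesis; this is where Assumption~\ref{as1}-(d) is essential, since it controls $\cE_{z}^{\varphi}[w(z_N)]$ geometrically and thereby both justifies dominated convergence and shows that the residual term $\beta^{N}\cE_{z}^{\varphi}[u^{*}(z_N)]$ vanishes. The measurable selection step is routine once continuity in $(z,a)$ has been secured via Lemma~\ref{nlemma1}, and the contraction/fixed point portion is immediate from Lemma~\ref{nlemma2}.
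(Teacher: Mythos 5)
Your proposal is correct and follows essentially the standard weighted-norm dynamic-programming route; note that the paper itself offers no proof of Theorem~\ref{thm1}, stating it as a known result and citing \cite[Section 8.5]{HeLa99}, whose argument is exactly the one you reconstruct (contraction of $T$ on the complete space $C_w(\sZ)$ via Lemmas~\ref{nlemma1} and~\ref{nlemma2}, measurable selection over the compact set $\sA$, and the iteration/limit step controlled by (\ref{eq1})--(\ref{eq2})). The one piece left incomplete is the \emph{only if} half of the characterization (\ref{eq11}): your iteration argument shows that any $f^{*}\in\rF$ satisfying (\ref{eq11}) is optimal, but the converse additionally requires the (routine) policy-evaluation identity $J(f^{*},z)=c(z,f^{*}(z))+\beta\int_{\sZ}J(f^{*},y)\,\eta(dy|z,f^{*}(z))$, obtained by splitting off the first stage and using monotone convergence (finiteness being guaranteed by $J(f^{*},z)\leq Mw(z)/(1-\sigma)$), so that optimality $J(f^{*},\cdot)=J^{*}$ then yields (\ref{eq11}).
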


Define, for all $n\geq1$, the operator $T_n$ (which will be used to approximate $T$) by
\begin{align}
T_n u(z) \coloneqq \min_{a\in\Lambda_n} \biggl[ c(z,a) + \beta \int_{\sZ} u(y) \eta(dy|z,a) \biggr] \label{eq13}.
\end{align}
Note that $T_n$ is the Bellman optimality operator for MDP$_n$ having components $\bigl\{ \sZ,\Lambda_n,\eta,c \bigr\}$. Analogous with $T$, it can be shown that $T_n$ is a contraction operator with modulus $\sigma = \alpha \beta$ mapping $C_w(\sZ)$ into itself. Let $J_n^{*} \in C_w(\sZ)$ (discounted value function of MDP$_n$) denote the fixed point of $T_n$.

The following theorem is the main result of this section which states that the discounted value function of MDP$_n$ converges to the discounted value function of the original MDP.

\begin{theorem}
\label{thm2}
For any compact set $K \subset \sZ$ we have
\begin{align}
\lim_{n\rightarrow\infty} \sup_{z\in K} |J_n^*(z) - J^*(z)| &= 0. \label{eq16} \\
\intertext{Therefore,}
\lim_{n\rightarrow\infty} |J_n^*(z) - J^*(z)| &= 0 \text{  } \text{ for all $z\in\sZ$}. \nonumber
\end{align}
\end{theorem}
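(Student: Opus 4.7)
My plan is as follows. Observe first that $J_n^{*} \geq J^{*}$ pointwise, since $\Lambda_n \subset \sA$ makes $T_n u \geq T u$ for every $u \in C_w(\sZ)$, so the same inequality is preserved by iterating the contractions; the content of the theorem is the matching upper bound uniformly on compact $K \subset \sZ$. I would avoid any attempt to control $\|J_n^{*}-J^{*}\|_w$ by a one-shot contraction argument, because the natural operator convergence available under Assumption~\ref{as1} is uniform only on compacta (not in $w$-norm), so that strategy is circular. Instead I split via finitely many value iterates of $T_n$ starting from $J^{*}$:
\[
|J_n^{*}(z)-J^{*}(z)| \;\leq\; |J_n^{*}(z)-T_n^{k}J^{*}(z)| \;+\; |T_n^{k}J^{*}(z)-J^{*}(z)|,
\]
handling the first term (I) by the $w$-norm contraction of $T_n$ and the second term (II) by induction on $k$ (noting $T^k J^{*}=J^{*}$).

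For (I), $T_n$ is a $\sigma$-contraction in $w$-norm with fixed point $J_n^{*}$, so $\|J_n^{*}-T_n^{k}J^{*}\|_w=\|T_n^{k}J_n^{*}-T_n^{k}J^{*}\|_w\leq\sigma^{k}\|J_n^{*}-J^{*}\|_w\leq\sigma^{k}\cdot 2M/(1-\sigma)$, using the a-priori bounds $\|J^{*}\|_w,\|J_n^{*}\|_w\leq M/(1-\sigma)$ from iterating $\|T_n u\|_w\leq M+\sigma\|u\|_w$ starting at $0$. Multiplying by $\sup_K w<\infty$ makes (I) small on $K$ for large $k$, uniformly in $n$. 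The key ingredient for (II) is a single-step operator approximation lemma: for every $u\in C_w(\sZ)$ and every compact $K\subset \sZ$, $\sup_{z\in K}|T_n u(z)-Tu(z)|\to 0$. Writing $h_u(z,a)=c(z,a)+\beta\int u(y)\,\eta(dy|z,a)$, Lemma~\ref{nlemma1} and Assumption~\ref{as1}(a) give continuity of $h_u$ on $\sZ\times\sA$. For each $z\in K$ pick $a^{*}(z)\in\argmin_{a\in\sA}h_u(z,a)$ and a closest point $a_n^{*}(z)\in\Lambda_n$ with $d_\sA(a_n^{*}(z),a^{*}(z))<1/n$; then $0\leq T_n u(z)-Tu(z)\leq h_u(z,a_n^{*}(z))-h_u(z,a^{*}(z))$, which vanishes uniformly on $K$ by uniform continuity of $h_u$ on the compactum $K\times\sA$.

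The induction for (II) is the main technical hurdle. Assuming $T_n^{k}J^{*}\to J^{*}$ uniformly on every compact set, write
\[
|T_n^{k+1}J^{*}(z)-J^{*}(z)| \;\leq\; |T_n(T_n^{k}J^{*})(z)-T_n(J^{*})(z)| \;+\; |T_n J^{*}(z)-TJ^{*}(z)|;
\]
the second piece is the operator approximation lemma with $u=J^{*}$. The first is bounded by $\beta\sup_{a\in\sA}\int|T_n^{k}J^{*}-J^{*}|(y)\,\eta(dy|z,a)$, and to turn uniform-on-compacta convergence of the integrand into uniform-in-$(z,a)\in K\times\sA$ convergence of the integrals I need a $w$-tightness statement: for every $\varepsilon>0$ there is a compact $K'\subset\sZ$ with $\sup_{(z,a)\in K\times\sA}\int_{K'^{c}}w(y)\,\eta(dy|z,a)<\varepsilon$. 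This follows from weak continuity of $\eta$ on the compact $K\times\sA$ together with the continuity of $(z,a)\mapsto\int w\,d\eta(\cdot|z,a)$ in Assumption~\ref{as1}(d) (standard fact: weak convergence plus convergence of $\int w\,d\eta$ upgrades to convergence of $\int g\,d\eta(\cdot|z,a)$ for every continuous $g$ with $|g|\leq C(1+w)$; applying this to $g=\phi_j w$ with continuous cutoffs $\phi_j\nearrow 1$ and invoking Dini on $K\times\sA$ produces the uniformity). With $K'$ so chosen, splitting the integral gives an upper bound $\sup_{y\in K'}|T_n^{k}J^{*}-J^{*}|(y)+2M\varepsilon/(1-\sigma)$, whose first term vanishes by the induction hypothesis. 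Finally, given $\varepsilon>0$ I fix $k$ so (I) $<\varepsilon/2$ on $K$ uniformly in $n$, then take $n$ large so (II) $<\varepsilon/2$; this proves the theorem. The principal difficulty, as indicated, is the Feller $w$-tightness that converts uniform-on-compacta convergence of functions into uniform convergence of their integrals against $\eta(\cdot|z,a)$; without it, only the uninformative bound $\|T_n J^{*}-J^{*}\|_w=O(1)$ is available and the contraction argument fails to close.
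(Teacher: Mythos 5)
Your overall architecture is essentially the paper's, just reorganized: you iterate $T_n$ starting from $J^*$, while the paper compares the two value-iteration sequences $v^t=T^t0$ and $v_n^t=T_n^t0$ and closes with (\ref{eq4}) and (\ref{eq6}); the three ingredients are the same. Your term (I) is the uniform-in-$n$ contraction tail; your one-step estimate $\sup_{z\in K}|T_nu(z)-Tu(z)|\to0$, proved via continuity of $c(z,a)+\beta\int u\,d\eta(\cdot|z,a)$ (Lemma~\ref{nlemma1}), uniform continuity on $K\times\sA$, and the $1/n$-net property, is exactly the mechanism inside Lemma~\ref{lemma1} (your pointwise minimizers play the role of the selector $f_t^*$ and $\Upsilon_n(f_t^*)$, and measurability is indeed not needed for the sup bound); and the $w$-tightness statement you isolate is precisely Lemma~\ref{nlemma0}, used in the paper inside Lemma~\ref{lemma3}. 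The bookkeeping in (I), the induction over $k$ quantified over all compact sets, and the final assembly are all correct.

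The genuine gap is your proposed proof of the tightness statement. For the Dini/cutoff argument to produce a \emph{compact} $K'$ with $\sup_{(z,a)\in K\times\sA}\int_{K'^c}w(y)\,\eta(dy|z,a)<\varepsilon$, you need continuous $\phi_j$ with $0\le\phi_j\le1$, $\phi_j\nearrow1$ pointwise, and $\phi_j$ vanishing off a compact set; the existence of such a sequence forces $\sZ$ to be $\sigma$-compact. The paper assumes only that $\sZ$ is a Borel space, and in the central application (Section~\ref{sec3}) $\sZ=\P(\sX)$ with noncompact $\sX$, which is \emph{not} $\sigma$-compact (compact sets of measures have empty interior in the weak topology, so by Baire category no countable union of them exhausts $\P(\sX)$). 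Compactness of $K'$ is not negotiable in your induction, since the inductive hypothesis only gives uniform convergence of $T_n^kJ^*-J^*$ on compacta; a "tail" set of the form of a complement of an $\varepsilon$-neighborhood of a compact set does not suffice. The paper's fix is the proof of Lemma~\ref{nlemma0}: the map $(z,a)\mapsto Q(\cdot|z,a)$ with $Q(D|z,a)=\int_Dw(y)\,\eta(dy|z,a)$ is weakly continuous (because $gw\in C_w(\sZ)$ for bounded continuous $g$, then Lemma~\ref{nlemma1}), hence $\{Q(\cdot|z,a):(z,a)\in K\times\sA\}$ is a weakly compact family of finite measures, and the generalized Prohorov theorem \cite[Theorem 8.6.2]{Bog07} gives uniform tightness. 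Replace your cutoff/Dini step by this argument (or restrict the claim to $\sigma$-compact or locally compact state spaces, which would exclude the POMDP application) and your proof closes; the remaining constants you quote are off only by immaterial factors.
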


To prove Theorem~\ref{thm2}, we need following results.

\begin{lemma}\label{nlemma0}
For any compact subset $K$ of $\sZ$ and for any $\varepsilon>0$, there exists a compact subset $K_{\varepsilon}$ of $\sZ$ such that
\begin{align}
\sup_{(z,a) \in K\times\sA} \int_{K_{\varepsilon}^c} w(y) \eta(dy|z,a) < \varepsilon. \label{tight}
\end{align}
\end{lemma}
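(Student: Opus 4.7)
The plan is to reduce tightness of the weighted integrals to compactness of a continuous image in the weak topology of finite measures. Specifically, for each $(z,a) \in K \times \sA$ define the finite Borel measure $\mu_{z,a}$ on $\sZ$ by $\mu_{z,a}(B) \coloneqq \int_B w(y) \, \eta(dy|z,a)$. Its total mass is $\mu_{z,a}(\sZ) = \int w \, d\eta(\cdot|z,a) \leq \alpha w(z)$, which is uniformly bounded on $K \times \sA$ because $w$ is continuous on the compact set $K$. The target inequality~(\ref{tight}) is simply $\mu_{z,a}(K_\varepsilon^c) < \varepsilon$ uniformly in $(z,a)$, i.e., tightness of the family $\{\mu_{z,a}\}$.

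First I would prove the key intermediate claim: the map $(z,a) \mapsto \mu_{z,a}$ is continuous in the weak topology on finite measures. Given $(z_n,a_n) \to (z,a)$ and a bounded continuous $g$ on $\sZ$, without loss of generality $0 \leq g \leq C$ for some constant $C$. Then both $gw$ and $(C-g)w$ are nonnegative and lower semicontinuous, and Assumption~\ref{as1}-(b) gives $\eta(\cdot|z_n,a_n) \to \eta(\cdot|z,a)$ weakly, so the Portmanteau theorem yields
\begin{align}
\liminf_{n\to\infty} \int gw \, d\eta(\cdot|z_n,a_n) &\geq \int gw \, d\eta(\cdot|z,a), \nonumber \\
\liminf_{n\to\infty} \int (C-g)w \, d\eta(\cdot|z_n,a_n) &\geq \int (C-g)w \, d\eta(\cdot|z,a). \nonumber
\end{align}
Combining the second inequality with the continuity of $\int w \, d\eta(\cdot|z,a)$ in $(z,a)$ (Assumption~\ref{as1}-(d)) produces the matching upper bound $\limsup \int gw \, d\eta(\cdot|z_n,a_n) \leq \int gw \, d\eta(\cdot|z,a)$, so $\int g \, d\mu_{z_n,a_n} \to \int g \, d\mu_{z,a}$ as required.

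With continuity in hand, $\Gamma \coloneqq \{\mu_{z,a} : (z,a) \in K \times \sA\}$ is the image of the compact set $K \times \sA$ under a continuous map, hence compact in the weak topology of finite Borel measures on the Polish space $\sZ$. By Prokhorov's theorem (applied to finite rather than probability measures) compactness implies tightness, so for any $\varepsilon > 0$ there exists a compact $K_\varepsilon \subset \sZ$ with $\mu_{z,a}(K_\varepsilon^c) < \varepsilon$ for every $(z,a) \in K \times \sA$, which is exactly the desired bound $\int_{K_\varepsilon^c} w(y) \, \eta(dy|z,a) < \varepsilon$. The main obstacle is the weak continuity step: because $w$ is unbounded, one cannot directly invoke continuous dependence of $\int f \, d\eta(\cdot|z,a)$ on $(z,a)$ for bounded continuous $f$; the trick is to pair the one-sided Portmanteau inequality for $gw$ with the complementary lsc function $(C-g)w$, using the continuity of $\int w \, d\eta$ as the mass-preserving link. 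Once this upgrade is done, the rest is a textbook Prokhorov argument.
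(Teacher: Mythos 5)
Your proof is correct and follows essentially the same route as the paper: form the weighted measures $Q(\,\cdot\,|z,a)=\int_{\cdot}w(y)\,\eta(dy|z,a)$, show that $(z,a)\mapsto Q(\,\cdot\,|z,a)$ is weakly continuous on $K\times\sA$, and deduce tightness from compactness of the continuous image via Prohorov's theorem for finite (non-probability) measures. The only difference is that your Portmanteau sandwich with $gw$ and $(C-g)w$ re-derives a special case of Lemma~\ref{nlemma1}, which the paper instead cites directly (taking $u=gw\in C_w(\sZ)$ for $g\in C_b(\sZ)$).
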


\begin{proof}
Let us define the set of measures $\Xi$ on $\sZ$ as
\begin{align}
\Xi \coloneqq \biggl\{ Q(\,\cdot\,|z,a) : Q(D|z,a) = \int_{D} w(y) \eta(dy|z,a), \text{  }  (z,a) \in K\times\sA \biggr\}.\nonumber
\end{align}
Note that $\Xi$ is uniformly bounded since
\begin{align}
\sup_{(z,a) \in K\times\sA} \int_{\sZ} w(y) \eta(dy|z,a) \leq \alpha \sup_{z\in K} w(z) < \infty. \nonumber
\end{align}
If the mapping $Q: K\times\sA \ni (z,a) \mapsto Q(\,\cdot\,|z,a) \in \Xi$ is continuous with respect to the weak topology on $\Xi$, then $\Xi$ (being a continuous image of the compact set $K\times\sA$) is compact with respect to the weak topology. Then, by an extension of Prohorov's theorem to non-probability measures \cite[Theorem 8.6.2]{Bog07}, $\Xi$ is tight, completing the proof. Hence, we only need to prove the continuity of the mapping $Q$.

By Lemma~\ref{nlemma1}, for any $u \in C_w(\sZ)$, $\int_{\sZ} u(y) \eta(dy|z,a)$ is continuous in $(z,a)$. Let $(z_k,a_k) \rightarrow (z,a)$ in $K\times\sA$. Note that for any $g \in C_b(\sZ)$, $g w \in C_w(\sZ)$. Therefore, we have
\begin{align}
\lim_{k\rightarrow\infty} \int_{\sZ} g(y) Q(dy|z_k,a_k) &= \lim_{k\rightarrow\infty} \int_{\sZ} g(y) w(y) \eta(dy|z_k,a_k) \nonumber\\
&= \int_{\sZ} g(y) w(y) \eta(dy|z,a) = \int_{\sZ} g(y) Q(dy|z,a) \nonumber
\end{align}
proving that $Q(\,\cdot\,|z_k,a_k) \rightarrow Q(\,\cdot\,|z,a)$ weakly.
\end{proof}

\begin{lemma}\label{lemma3}
Let $\{u_n\}$ be a sequence in $C_w(\sZ)$ with $\sup_{n} \|u_n\|_w \coloneqq L <\infty$.
If $u_n$ converges to $u \in C_w(\sZ)$ uniformly on each compact subset of $\sZ$, then for any $f \in \rF$ and compact subset $K$ of $\sZ$ we have
\begin{align}
\lim_{n\rightarrow\infty} \sup_{z\in K} \biggl| \int_{\sZ} u_n(y) \eta(dy|z,f_n(z)) - \int_{\sZ} u(y) \eta(dy|z,f(z)) \biggr| = 0, \nonumber
\end{align}
where $f_n = \Upsilon_n(f)$ (see (\ref{neq6})).
\end{lemma}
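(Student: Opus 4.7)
My plan is to apply the triangle inequality to split the integrand into two pieces: (i) the effect of replacing $u_n$ by $u$, integrated against $\eta(\,\cdot\,|z, f_n(z))$, and (ii) the effect of replacing $\eta(\,\cdot\,|z, f_n(z))$ by $\eta(\,\cdot\,|z, f(z))$, integrated against $u$. Each piece would then be controlled uniformly in $z \in K$.

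For piece (ii) I would invoke Lemma~\ref{nlemma1} to see that $\phi(z,a) \coloneqq \int_{\sZ} u(y) \eta(dy|z,a)$ is continuous on $\sZ \times \sA$, hence uniformly continuous on the compact set $K \times \sA$ (here I use Assumption~\ref{as1}-(c)). Combined with the uniform estimate $\sup_{z \in \sZ} d_{\sA}(f_n(z), f(z)) < 1/n$ from (\ref{neq3}), this immediately gives $\sup_{z \in K} |\phi(z, f_n(z)) - \phi(z, f(z))| \to 0$.

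For piece (i) the key difficulty is that the integration runs over all of $\sZ$, which need not be compact, so uniform convergence of $u_n$ to $u$ on compacts is not directly applicable. To handle this, I would fix $\varepsilon > 0$ and use Lemma~\ref{nlemma0} to choose a compact $K_\varepsilon \subset \sZ$ with
\[
\sup_{(z',a') \in K\times\sA} \int_{K_\varepsilon^c} w(y) \eta(dy|z',a') < \varepsilon,
\]
then split the integral over $K_\varepsilon$ and $K_\varepsilon^c$. On $K_\varepsilon$ the hypothesis yields $\sup_{y \in K_\varepsilon} |u_n(y) - u(y)| \to 0$, and since $\eta(\,\cdot\,|z, f_n(z))$ is a probability measure this piece is handled uniformly in $z \in K$. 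On $K_\varepsilon^c$ I would use the pointwise bound $|u_n(y) - u(y)| \leq (L + \|u\|_w)\, w(y)$, which follows from $\sup_n \|u_n\|_w \leq L$ and $u \in C_w(\sZ)$, and then invoke the Lemma~\ref{nlemma0} estimate to bound the tail integral by $(L + \|u\|_w)\varepsilon$, uniformly in $z \in K$ since $(z, f_n(z)) \in K \times \sA$.

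Sending $n \to \infty$ first and then $\varepsilon \to 0$ completes the argument. The main technical obstacle is piece (i): because $\sZ$ is generally non-compact, uniform convergence on compacts alone is insufficient, and the $w$-weighted tightness estimate in Lemma~\ref{nlemma0} (itself a consequence of the growth condition in Assumption~\ref{as1}-(d)) is the crucial ingredient making the tail negligible uniformly in $z \in K$. Piece (ii) is essentially routine once continuity of $\phi$ is in hand.
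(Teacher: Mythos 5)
Your proposal is correct and takes essentially the same route as the paper's proof: the identical triangle-inequality decomposition, the same splitting of piece (i) over $K_\varepsilon$ and $K_\varepsilon^c$ via Lemma~\ref{nlemma0} with the $w$-norm bound on the tail, and the same uniform-continuity argument (Lemma~\ref{nlemma1} plus (\ref{neq3})) for piece (ii). No gaps; nothing further is needed.
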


\begin{proof}
Fix a compact subset $K$ of $\sZ$. Then for $K_{\varepsilon}$ as in Lemma~\ref{nlemma0},
\begin{align}
&\sup_{z\in K} \biggl| \int_{\sZ} u_n(y) \eta(dy|z,f_n(z)) - \int_{\sZ} u(y) \eta(dy|z,f(z)) \biggr| \nonumber \\
&\phantom{xxx}\leq \sup_{z\in K} \biggl| \int_{\sZ} u_n(y) \eta(dy|z,f_n(z)) -
\int_{\sZ} u(y) \eta(dy|z,f_n(z)) \biggr| \nonumber \\
&\phantom{xxxxxxxxxxxxxxxxx} + \sup_{z\in K} \biggl| \int_{\sZ} u(y) \eta(dy|z,f_n(z)) -
\int_{\sZ} u(y) \eta(dy|z,f(z)) \biggr| \nonumber \\
&\phantom{xxx}\leq \sup_{z\in K} \biggl| \int_{K_{\varepsilon}} u_n(y) \eta(dy|z,f_n(z)) - \int_{K_{\varepsilon}} u(y) \eta(dy|z,f_n(z)) \biggr| \nonumber \\
&\phantom{xxxxxxxxxxxxxxxxx}+ \sup_{z\in K} \biggl| \int_{K_{\varepsilon}^c} u_n(y) \eta(dy|z,f_n(z)) - \int_{K_{\varepsilon}^c} u(y) \eta(dy|z,f_n(z)) \biggr| \nonumber \\
&\phantom{xxxxxxxxxxxxxxxxxxxxxxxxxx}+ \sup_{z\in K} \biggl| \int_{\sZ} u(y) \eta(dy|z,f_n(z)) -
\int_{\sZ} u(y) \eta(dy|z,f(z)) \biggr|  \nonumber \\
&\phantom{xxx}\leq \sup_{y \in K_{\varepsilon}} |u_n(y) - u(y)| + 2 L \varepsilon + \sup_{z\in K} \biggl| \int_{\sZ} u(y) \eta(dy|z,f_n(z)) -
\int_{\sZ} u(y) \eta(dy|z,f(z)) \biggr|  \nonumber
\end{align}
Let us define $l(z,a) \coloneqq \int_{\sZ} u(y) \eta(dy|z,a)$. Since $u \in C_w(\sZ)$, by Lemma~\ref{nlemma1} $l$ is continuous, and therefore, uniformly continuous on $K\times\sA$. Note that in the last expression as $n\rightarrow\infty$: (i) the first term goes to zero since $u_n\rightarrow u$ uniformly on $K_{\varepsilon}$ and (ii) the last term goes to zero since $l$ is uniformly continuous on $K\times\sA$ and $f_n\rightarrow f$ uniformly. Then the result follows by observing that $\varepsilon$ is arbitrary.
\end{proof}

Let us define $v^0 = v_n^0 = 0$, and $v^{t+1} = T v^t$ and $v_n^{t+1} = T_n v_n^t$ for $t\geq1$; that is, $\{v^t\}_{t\geq1}$ and $\{v_n^t\}_{t\geq1}$ are successive approximations to the discounted value functions of the original MDP and MDP$_n$, respectively.
Lemma~\ref{nlemma2} implies that $v^t$ and $v^t_n$ are in $C_w(\sZ)$ for all $t$ and $n$.
By \cite[Theorem 8.3.6, p. 47]{HeLa99}, \cite[(8.3.34), p. 52]{HeLa99} and \cite[Section 8.5, p. 65]{HeLa99} we  have
\begin{align}
v^t(z) &\leq J^*(z) \leq M \frac{w(z)}{1-\sigma}, \label{eq3} \\
\|v^t - J^*\|_w &\leq M \frac{\sigma^t}{1-\sigma}, \label{eq4} \\
\intertext{and}
v_n^t(z) &\leq J_n^*(z) \leq M \frac{w(z)}{1-\sigma}, \label{eq5} \\
\|v_n^t - J_n^*\|_w &\leq M \frac{\sigma^t}{1-\sigma}. \label{eq6}
\end{align}
Since for each $n$ and $u$, $T u \leq T_n u$, we also have $v^t \leq v_n^t$  for all $t\geq1$ and $J^* \leq J_n^*$.

\begin{lemma}\label{lemma1}
For any compact set $K\subset\sZ$ and $t\geq1$, we have
\begin{align}
\lim_{n\rightarrow\infty} \sup_{z\in K} |v_n^t(z) - v^t(z)| = 0. \label{neq1}
\end{align}
\end{lemma}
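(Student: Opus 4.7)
The plan is to prove (\ref{neq1}) by induction on $t$; the base case and the inductive step share essentially the same structure.

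\textbf{Base case ($t=1$).} Since $v^0=v_n^0=0$, we have $v^1(z)=\min_{a\in\sA}c(z,a)$ and $v_n^1(z)=\min_{a\in\Lambda_n}c(z,a)$. By Assumption~\ref{as1}(a) and compactness of $K\times\sA$, $c$ is uniformly continuous on $K\times\sA$. Pick a measurable selector $f^{*}\in\rF$ with $v^1(z)=c(z,f^{*}(z))$; its existence is standard given the continuity of $c$ in $a$ and the compactness of $\sA$ (e.g., \cite[Proposition 7.33]{BeSh78}). Setting $f_n=\Upsilon_n(f^{*})$, we obtain $v^1\le v_n^1\le c(\cdot,f_n(\cdot))$, so (\ref{neq3}) and the uniform continuity of $c$ on $K\times\sA$ yield $\sup_{z\in K}|v_n^1(z)-v^1(z)|\to 0$.

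\textbf{Inductive step.} Suppose (\ref{neq1}) holds for some $t\ge 1$. Because $v^{t+1}\le v_n^{t+1}$ pointwise (as noted in the excerpt), it suffices to bound $\sup_{z\in K}(v_n^{t+1}(z)-v^{t+1}(z))$ from above. By Lemma~\ref{nlemma1} the map $l_{v^{t}}(z,a)\coloneqq\int_{\sZ}v^{t}(y)\,\eta(dy|z,a)$ is continuous on $\sZ\times\sA$, so $c+\beta l_{v^{t}}$ is continuous in $a$ on the compact set $\sA$ and a measurable minimizer $f^{*}\in\rF$ for $Tv^{t}$ exists. With $f_n=\Upsilon_n(f^{*})$, the definition of $v_n^{t+1}$ gives
\[
0\le v_n^{t+1}(z)-v^{t+1}(z)\le \bigl[c(z,f_n(z))-c(z,f^{*}(z))\bigr]+\beta\bigl[l_{v_n^{t}}(z,f_n(z))-l_{v^{t}}(z,f^{*}(z))\bigr].
\]
The first bracket vanishes uniformly on $K$ by uniform continuity of $c$ on $K\times\sA$ combined with (\ref{neq3}). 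The second bracket is precisely the expression controlled by Lemma~\ref{lemma3} applied to $u_n=v_n^{t}$ and $u=v^{t}$: the uniform $w$-norm bound $\sup_n\|v_n^{t}\|_w\le M/(1-\sigma)$ required by that lemma follows from (\ref{eq5}), and the uniform convergence of $v_n^{t}$ to $v^{t}$ on compact sets is exactly the induction hypothesis. This closes the induction.

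The only real technical hurdle is the integral term in the inductive step, and this is precisely what Lemma~\ref{lemma3} was constructed to handle. Everything else reduces to the uniform continuity of $c$ on compact rectangles together with the $1/n$-net property (\ref{neq3}); the monotonicity $v^{t}\le v_n^{t}$ is a convenient simplification that obviates controlling the symmetric side of the difference.
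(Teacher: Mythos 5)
Your proof is correct and follows essentially the same route as the paper's: induction on $t$, with the inductive step handled by choosing a measurable selector $f^{*}$ for $Tv^{t}$, quantizing it via $\Upsilon_n$, and splitting the one-sided difference $v_n^{t+1}-v^{t+1}$ into a cost term controlled by uniform continuity of $c$ on $K\times\sA$ together with (\ref{neq3}), and an integral term controlled by Lemma~\ref{lemma3} with the uniform bound (\ref{eq5}) and the induction hypothesis. The only difference is that you spell out the base case and the selector's existence slightly more explicitly, which the paper leaves implicit.
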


\begin{proof}
We prove (\ref{neq1}) by induction. For $t=1$, the claim holds since $v^0 = v^0_n = 0$, and $c$ is uniformly continuous on $K\times\sA$ for any compact subset $K$ of $\sZ$. Assume the claim is true for $t\geq1$. We fix any compact set $K$. Let $f^*_t$ denote the selector of $T v^t = v^{t+1}$; that is,
\begin{align}
v^{t+1}(z) = T v^t(z) = c(z,f^*_t(z)) + \beta \int_{\sZ} v^t(y) \eta(dy|z,f^*_t(z)), \nonumber
\end{align}
and let $f^*_{t,n} \coloneqq \Upsilon_n(f^*_t)$ (see (\ref{neq6})). By (\ref{eq3}) and (\ref{eq5}) we have
\begin{align}
v^t(z) &\leq M \frac{w(z)}{1-\sigma} \label{neqa} \\
v^t_n(z) &\leq M \frac{w(z)}{1-\sigma}, \label{neq2}
\end{align}
for all $t$ and $n$. For each $n\geq 1$, we have
\begin{align}
&\sup_{z\in K} \bigl|v_n^{t+1}(z) - v^{t+1}(z)\bigr| \nonumber \\
&=\sup_{z\in K} \bigl(v_n^{t+1}(z) - v^{t+1}(z)\bigr) \text{  } \text{ (as $v^{t+1}\leq v_n^{t+1}$)} \nonumber \\
&= \sup_{z \in K} \biggl( \min_{\Lambda_n} \biggl[ c(z,a) + \beta \int_{\sZ} v_n^t(y) \eta(dy|z,a) \biggr] - \min_{\sA} \biggl[ c(z,a) + \beta \int_{\sZ} v^t(y) \eta(dy|z,a) \biggr] \biggr)  \nonumber \\
&\leq \sup_{z \in K} \biggl( \biggl[ c(z,f^*_{t,n}(z)) + \beta \int_{\sZ} v_n^t(y) \eta(dy|z,f^*_{t,n}(z)) \biggr] - \biggl[ c(z,f^*_t(z)) + \beta \int_{\sZ} v^t(y) \eta(dy|z,f^*_t(z)) \biggr] \biggr) \nonumber \\
&\leq \sup_{z\in K} \bigl|c(z,f^*_{t,n}(z)) - c(z,f^*_{t}(z))\bigr| + \beta \sup_{z\in K} \biggl| \int_{\sZ} v_n^t(y) \eta(dy|z,f^*_{t,n}(z)) -\int_{\sZ} v^t(y) \eta(dy|z,f^*_t(z)) \biggr|. \nonumber
\end{align}
Note that in the last expression, as $n\rightarrow\infty$ the first term goes to zero since $c$ is uniformly continuous on $K\times\sA$ and $f^*_{t,n}\rightarrow f^*_t$ uniformly, and the second term goes to zero by Lemma~\ref{lemma3}, (\ref{neqa}), and (\ref{neq2}).
\end{proof}

Now, using Lemma~\ref{lemma1} we prove Theorem~\ref{thm2}.

\begin{proof}[Proof of Theorem~\ref{thm2}]
Let us fix any compact set $K\subset\sZ$. Since $w$ is bounded on $K$, it is enough to prove $\lim_{n\rightarrow\infty} \sup_{z\in K} \frac{| J_n^*(z) - J^*(z) |}{w(z)} = 0$. We have
\begin{align}
&\sup_{z\in K} \frac{| J_n^*(z) - J^*(z) |}{w(z)} \leq \sup_{z\in K} \frac{| J_n^*(z) - v_n^t(z) |}{w(z)} + \sup_{z\in K} \frac{| v_n^t(z) - v^t(z) |}{w(z)} + \sup_{z\in K} \frac{| v^t(z) - J^*(z) |}{w(z)} \nonumber \\
&\leq 2 M \frac{\sigma^t}{1-\sigma} + \sup_{z\in K} \frac{| v_n^t(z) - v^t(z) |}{w(z)} \text{  } \text{ (by (\ref{eq4}) and (\ref{eq6}))} \nonumber.
\end{align}
Since $w\geq1$, $\sup_{z\in K} \frac{| v_n^t(z) - v^t(z) |}{w(z)} \rightarrow 0$ as $n\rightarrow\infty$ for all $t$ by Lemma~\ref{lemma1}. Hence, the last expression can be made arbitrarily small since $t\geq1$ is arbitrary and $\sigma \in (0,1)$, this completes the proof.
\end{proof}

\section{Near Optimality of Quantized Policies with Average Cost}
\label{sec2sub2}

In this section we consider the problem \textbf{(P)} for the average cost. We prove an approximation result analogous to Theorem~\ref{thm2}. To do this, some new assumptions are needed on the components of the original MDP in addition to the conditions in Assumption~\ref{as1}. A version of these assumptions were used in \cite{Veg03} and \cite{GoHe95} to study the existence of the solution to the Average Cost Optimality Equality (ACOE) and Inequality (ACOI). For any probability measure $\vartheta$ and measurable function $h$ on $\sZ$, let $\vartheta(h) \coloneqq \int_{\sZ} h(z) \vartheta(dz)$.

\begin{assumption}\label{as2}
Suppose Assumption~\ref{as1} holds with (\ref{eq2}) replaced by condition (e) below. Moreover, supposed there exist a probability measure $\lambda$ on $\sZ$ and a continuous function $\phi:\sZ\times\sA \rightarrow [0,\infty)$ such that
\begin{itemize}
\item [(e)] $\int_{\sZ} w(y) \eta(dy|z,a) \leq \alpha w(z) + \lambda(w) \phi(z,a)$ for all $(z,a) \in \sZ\times\sA$, where $\alpha \in (0,1)$.
\item [(f)] $\eta(D|z,a) \geq \lambda(D) \phi(z,a)$ for all $(z,a) \in \sZ\times\sA$ and $D \in \B(\sZ)$.
\item [(g)] The weight function $w$ is $\lambda$-integrable.
\item [(h)] $\int_{\sZ} \phi(z,f(z)) \lambda(dz) > 0$ for all $f \in \rF$.
 \end{itemize}
\end{assumption}

Any $f \in \rF$ gives rise to a time-homogenous Markov chain $\{z_t\}_{t=1}^{\infty}$ (state process) with the transition probability
$\eta(\,\cdot\,|z,f(z))$ on $\sZ$ given $\sZ$. For any $t\geq1$, let $\eta^t(\,\cdot\,|z,f(z))$ denote the $t$-step transition probability of this Markov chain given the initial point $z$. Hence, $\eta^t(\,\cdot\,|z,f(z))$ is recursively given by
\begin{align}
\eta^{t+1}(\,\cdot\,|z,f(z)) = \int_{\sZ} \eta(\,\cdot\,|y,f(y)) \eta^t(dy|z,f(z)). \nonumber
\end{align}

For any $z \in \sZ$, let
\begin{align}
V^*(z) \coloneqq \inf_{\varphi\in\Phi} V(\varphi,z). \nonumber
\end{align}
$V^*$ is called the average cost value function of the MDP. The following theorem is a consequence of \cite[Theorems 3.3 and 3.6]{Veg03}.

\begin{theorem}\label{thm3}
Under Assumption~\ref{as2} the following holds.
\begin{itemize}
\item [(i)] For each $f \in \rF$, the stochastic kernel $\eta(\,\cdot\,|z,f(z))$ is positive Harris recurrent with unique invariant probability measure $\nu_f$. Furthermore, $w$ is $\nu_f$-integrable, and therefore, $\rho_f  \coloneqq \int_{\sZ} c(z,f(z)) \nu_f(dz) <~\infty$.
\item [(ii)] There exist $f^{*} \in \rF$ and  $h^{*} \in C_w(\sZ)$ such that the triple $(h^{*},f^{*},\rho_{f^{*}})$ satisfies the average cost optimality equality (ACOE), i.e.,
\begin{align}
\rho_{f^{*}} + h^{*}(z) &= \min_{a\in\sA} \biggl[ c(z,a) + \int_{\sZ} h^{*}(y) \eta(dy|z,a) \biggr] \nonumber \\
&=  c(z,f^{*}(z)) + \int_{\sZ} h^{*}(y) \eta(dy|z,f^{*}(z)), \nonumber
\end{align}
and therefore,
\begin{align}
V^{*}(z) = \rho_{f^{*}}, \nonumber
\end{align}
for all $z\in\sZ$.
\end{itemize}
\end{theorem}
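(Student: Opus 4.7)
The plan is to reduce the claim to a direct application of Theorems 3.3 and 3.6 of \cite{Veg03}, so the main work consists of verifying that Assumption~\ref{as2} fits their hypotheses. Parts (a)--(b) of Assumption~\ref{as1}, together with the continuity of $\int w\, d\eta$ from (d), yield the continuity framework; (e) is a modified Foster--Lyapunov drift; (f) is a uniform minorization; (g) gives $\lambda$-integrability of $w$; and (h) ensures that this minorization is active under every deterministic stationary policy.

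For part (i), fix $f \in \rF$. Restricting (f) and (h) along $f$ gives the one-step minorization $\eta(\,\cdot\,|z,f(z)) \geq \phi(z,f(z))\,\lambda(\,\cdot\,)$ with $\int_{\sZ} \phi(z,f(z))\,\lambda(dz) > 0$, so the induced chain is $\lambda$-irreducible and possesses a non-trivial small function. Restricting (e) along $f$ yields $\int_{\sZ} w(y)\,\eta(dy|z,f(z)) \leq \alpha w(z) + \lambda(w)\phi(z,f(z))$ with $\alpha \in (0,1)$, which is a standard Meyn--Tweedie drift toward the small set $\{\phi(\cdot,f(\cdot)) \geq \varepsilon\}$ (non-empty for small $\varepsilon$ by (h)). This produces positive Harris recurrence and a unique invariant probability measure $\nu_f$. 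Integrating the drift inequality against $\nu_f$ and using (g) gives $\nu_f(w) < \infty$, and then (\ref{eq1}) yields $\rho_f \leq M\nu_f(w) < \infty$.

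For part (ii), I would follow the vanishing-discount approach. For each $\beta \in (0,1)$ let $J_\beta^*$ denote the discounted value function from Theorem~\ref{thm1}, fix a reference point $z_0 \in \sZ$, and set $h_\beta(z) := J_\beta^*(z) - J_\beta^*(z_0)$ and $\rho_\beta := (1-\beta) J_\beta^*(z_0)$. Using the drift--minorization pair, one shows that $\{h_\beta\}$ is uniformly bounded in $w$-norm and equicontinuous on compact subsets of $\sZ$, while $\{\rho_\beta\}$ is bounded. An Arzel\`a--Ascoli argument extracts a sequence $\beta_n \uparrow 1$ with $\rho_{\beta_n} \to \rho$ and $h_{\beta_n} \to h^* \in C_w(\sZ)$ uniformly on compact sets. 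Passing to the limit in the discounted Bellman equation $\rho_{\beta_n} + h_{\beta_n}(z) = \min_{a \in \sA}\bigl[c(z,a) + \beta_n \int_{\sZ} h_{\beta_n}(y)\,\eta(dy|z,a)\bigr]$ --- using Lemma~\ref{nlemma1} for weak continuity of the integral, Lemma~\ref{nlemma0} for tightness to justify the exchange of limit and integral, and a measurable selection theorem \cite{BeSh78} to extract $f^* \in \rF$ --- delivers the ACOE, with the identification $\rho = \rho_{f^*}$ following by integrating the equation against $\nu_{f^*}$.

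The main obstacle is the uniform $w$-norm bound and compact-set equicontinuity of $\{h_\beta\}_{\beta}$: conditions (e) and (f)--(h) are precisely what is needed to control the oscillation of $J_\beta^*$ independently of $\beta$, but extracting this estimate quantitatively requires the delicate coupling and geometric ergodicity arguments of \cite[Section 3]{Veg03}. Once this step is in hand, the passage to the limit in the Bellman equation reduces to the continuity and tightness tools already established in Section~\ref{sec2}.
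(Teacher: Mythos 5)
Your outline of part (i) is fine in substance: restricting (e)--(h) along a fixed $f$ gives exactly the drift--minorization pair under which positive Harris recurrence, uniqueness of $\nu_f$, and $\nu_f(w)<\infty$ (hence $\rho_f<\infty$ via (\ref{eq1})) follow, and this is what \cite{Veg03} establishes. The genuine gap is in part (ii). You switch to a vanishing-discount argument, and the entire construction hinges on the step you yourself flag as the ``main obstacle'': a $\beta$-uniform $w$-norm bound \emph{and} equicontinuity on compacts of $h_\beta = J_\beta^* - J_\beta^*(z_0)$, so that Arzel\`a--Ascoli produces a continuous limit and the limit passes through the minimum to give the ACOE with \emph{equality}. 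You do not prove this, and the reference you lean on does not supply it: \cite{Veg03} does not proceed by vanishing discount at all; its Theorem 3.5 obtains $h^*$ as the unique fixed point in $B_w(\sZ)$ of the contraction
\begin{align}
F u(z) = \min_{a \in \sA} \biggl[ c(z,a) + \int_{\sZ} u(y)\, \eta(dy|z,a) - \lambda(u)\,\phi(z,a) \biggr], \nonumber
\end{align}
so there are no coupling or geometric-ergodicity estimates there that control the oscillation or the modulus of continuity of the discounted relative value functions. Without equicontinuity, the vanishing-discount route at this level of generality typically yields only the average cost optimality \emph{inequality} with a merely measurable (or lower semicontinuous) relative value function, not the ACOE with $h^{*} \in C_w(\sZ)$. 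Thus the one piece of the theorem that genuinely needs an argument beyond citing \cite{Veg03} --- the continuity statement $h^{*} \in C_w(\sZ)$ --- is exactly the piece your proposal leaves unsupported.

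For comparison, the paper's proof does no re-derivation: everything in (i)--(ii) except $h^{*} \in C_w(\sZ)$ is quoted from \cite[Theorems 3.3 and 3.6]{Veg03}, and continuity of $h^{*}$ is then obtained in two lines from the fixed-point characterization above: since $\phi$ is continuous, Lemma~\ref{nlemma1} shows the bracketed function is continuous in $(z,a)$ whenever $u \in C_w(\sZ)$, so by Lemma~\ref{nlemma2} the contraction $F$ maps the $w$-norm-closed subspace $C_w(\sZ)$ into itself, and hence its unique fixed point $h^{*}$ lies in $C_w(\sZ)$. If you wish to keep your structure, the repair is either to adopt this fixed-point argument for the continuity of $h^{*}$, or to actually establish the uniform bound and equicontinuity of $\{h_\beta\}$, which would require new estimates not present in Section~\ref{sec2} or in \cite{Veg03}.
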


\begin{proof}
The only statement that does not directly follow from \cite[Theorems 3.3 and 3.6]{Veg03} is the fact: $h^{*} \in C_w(\sZ)$. Hence, we only prove this.

By \cite[Theorem 3.5]{Veg03}, $h^*$ is the unique fixed point of the following contraction operator with modulus $\alpha$
\begin{align}
F u(z) \coloneqq \min_{a \in \sA} \biggl[ c(z,a) + \int_{\sZ} u(y) \eta(dy|z,a) - \lambda(u) \phi(z,a) \biggr]. \nonumber
\end{align}
Since $\phi$ is continuous, by Lemma~\ref{nlemma1} the function inside the minimization is continuous in $(z,a)$ if $u \in C_w(\sZ)$. Then by Lemma~\ref{nlemma2}, $F$ maps $C_w(\sZ)$ into itself. Therefore, $h^* \in C_w(\sZ)$.
\end{proof}

This theorem implies that for each $f \in \rF$, the average cost is given by $V(f,z) = \int_{\sZ} c(y,f(y)) \nu_f(dy)$ for all $z\in\sZ$ (instead of $\nu_f$-a.e.).

\begin{remark}
If the state space $\sZ$ is compact and the transition probability $\eta(\,\cdot\,|x,a)$ has a strictly positive density $g(y|x,a)$ with respect to some probability measure $\vartheta$ which is continuous in $(y,x,a)$, then Assumption~\ref{as2} holds for
$w=1$, $\lambda = \vartheta$, and $\phi(z,a) = \min_{y \in \sZ} g(y|z,a)$.
\end{remark}

Note that all the statements in Theorem~\ref{thm3} are also valid for MDP$_n$ with an optimal policy $f_n^*$ and a canonical triplet $(h_n^*,f_n^*,\rho_{f_n^*})$. Analogous with $F$, define the contraction operator $F_n$ (with modulus $\alpha$) corresponding to MDP$_n$ as
\begin{align}
F_n u(z) \coloneqq \min_{a \in \Lambda_n} \biggl[ c(z,a) + \int_{\sZ} u(y) \eta(dy|z,a) - \lambda(u) \phi(z,a) \biggr], \nonumber
\end{align}
and therefore, $h_n^* \in C_w(\sZ)$ is its fixed point.

The next theorem is the main result of this section which states that the average cost value function, denoted as $V^*_n$, of MDP$_n$ converges to the average cost value function $V^*$ of the original MDP.

\begin{theorem}\label{mainthm2}
We have
\begin{align}
\lim_{n\rightarrow\infty} |V^*_n - V^*| = 0, \nonumber
\end{align}
where $V^*_n$ and $V^*$ are both constants.
\end{theorem}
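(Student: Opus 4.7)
The plan is to reduce the claim to establishing $h^*_n\to h^*$ uniformly on compact subsets of $\sZ$ and then to read off $\rho_{f^*_n}\to \rho_{f^*}$ from the ACOE evaluated at a single point. By Theorem~\ref{thm3}, $V^*_n=\rho_{f^*_n}$ and $V^*=\rho_{f^*}$ are the constants appearing in the ACOEs for MDP$_n$ and the original MDP, with relative value functions $h^*_n,h^*\in C_w(\sZ)$ that are the unique fixed points of the contractions $F_n,F$ of common modulus $\alpha$ on $C_w(\sZ)$. Since $F_n 0(z)=\min_{a\in\Lambda_n} c(z,a)\leq M w(z)$, the standard fixed-point estimate yields the uniform bound $\|h^*_n\|_w\leq \|F_n 0\|_w/(1-\alpha)\leq M/(1-\alpha)$ (and likewise $\|h^*\|_w\leq M/(1-\alpha)$), which will be essential for controlling tails.

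The main step is to show that $h^*_n\to h^*$ uniformly on every compact $K\subset\sZ$. I would imitate the inductive scheme of Theorem~\ref{thm2}: set $u^0=u_n^0=0$, $u^{t+1}=Fu^t$, $u_n^{t+1}=F_n u_n^t$, and prove inductively that $u_n^t\to u^t$ uniformly on compacts for every $t\geq 1$. The inductive step is an adaptation of Lemma~\ref{lemma1} to the operator $F$: the integrand inside the min defining $F$ is continuous in $(z,a)$ whenever $u\in C_w(\sZ)$ (by Lemma~\ref{nlemma1}, continuity of $\phi$, and the fact that $\lambda(u)$ is a scalar depending continuously on $u$ in $\|\cdot\|_w$ because $w$ is $\lambda$-integrable by Assumption~\ref{as2}(g)), so a variant of Lemma~\ref{lemma3} together with Lemma~\ref{nlemma0} propagates compact-uniform convergence through one application of the operator. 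Combining the induction with the contraction estimates $\|h^*-u^t\|_w\leq \alpha^t M/(1-\alpha)$ and $\|h^*_n-u_n^t\|_w\leq \alpha^t M/(1-\alpha)$ and using that $w$ is bounded on $K$, one lets $t\to\infty$ after $n\to\infty$ to conclude $\sup_{z\in K}|h^*_n(z)-h^*(z)|\to 0$.

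With this in hand I fix any $z_0\in\sZ$ and write the two ACOEs as
\begin{align}
\rho_{f^*_n}+h^*_n(z_0)&=\min_{a\in\Lambda_n}G_n(a), &G_n(a)&\coloneqq c(z_0,a)+\int_{\sZ} h^*_n(y)\,\eta(dy|z_0,a),\nonumber\\
\rho_{f^*}+h^*(z_0)&=\min_{a\in\sA}G(a), &G(a)&\coloneqq c(z_0,a)+\int_{\sZ} h^*(y)\,\eta(dy|z_0,a).\nonumber
\end{align}
Splitting $\int (h^*_n-h^*)\,\eta(dy|z_0,a)$ over a compact $K_\varepsilon\subset\sZ$ (on which $h^*_n\to h^*$ uniformly) and its complement (whose $w$-mass under $\eta(\cdot|z_0,a)$ is at most $\varepsilon$ uniformly in $a\in\sA$ by Lemma~\ref{nlemma0} applied with $K=\{z_0\}$), and invoking the uniform $w$-bound on $\|h^*_n\|_w+\|h^*\|_w$, gives $\sup_{a\in\sA}|G_n(a)-G(a)|\to 0$. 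Continuity of $G$ on the compact set $\sA$ together with the density property of $\Lambda_n$ then forces $\min_{\Lambda_n}G_n\to\min_{\sA}G$. Cancelling the pointwise limit $h^*_n(z_0)\to h^*(z_0)$ from the two ACOEs yields $\rho_{f^*_n}\to\rho_{f^*}$, which is the statement.

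The principal obstacle is the first step — propagating compact-uniform convergence through one application of $F$ — because the operator carries the extra term $-\lambda(u)\phi(z,a)$ absent in the discounted setup of Section~\ref{sec2}. Since $\lambda(u)$ is a scalar that depends continuously on $u$ in $w$-norm (Assumption~\ref{as2}(g)) and $\phi$ is continuous in $(z,a)$, this extra term is essentially bookkeeping: the proofs of Lemmas~\ref{nlemma1}--\ref{lemma1} go through with the obvious modifications, with the role previously played by $\beta\int u\,d\eta(\cdot|z,a)$ now played by $\int u\,d\eta(\cdot|z,a)-\lambda(u)\phi(z,a)$ (equivalently, by integration against the subprobability kernel $\eta-\phi\lambda$ from Assumption~\ref{as2}(f)).
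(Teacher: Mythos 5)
Your proposal is correct, and its core coincides with the paper's: the bulk of the work is showing $h_n^*\to h^*$ uniformly on compact sets via the successive approximations $u^t_n,u^t$ and an adaptation of Lemmas~\ref{nlemma0}--\ref{lemma1} to the operators $F,F_n$ (this is exactly the paper's Lemmas~\ref{lemma2} and \ref{lemma4}), together with the uniform bound $\|h_n^*\|_w\le M/(1-\alpha)$. Where you diverge is the endgame. The paper exploits monotonicity ($h^*\le h_n^*$, $\rho_{f^*}\le\rho_{f_n^*}$, from (\ref{aux50}) and Lemma~\ref{lemma_aux}) to get the $\liminf$ bound for free, and bounds the $\limsup$ by inserting the quantized policy $f_n=\Upsilon_n(f^*)$ into the ACOE of MDP$_n$ and invoking Lemma~\ref{lemma3}. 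You instead fix $z_0$, prove $\sup_{a\in\sA}|G_n(a)-G(a)|\to 0$ by splitting over $K_\varepsilon$ from Lemma~\ref{nlemma0} (with $K=\{z_0\}$) and its complement, and then pass the minima to the limit using continuity of $G$ on the compact $\sA$ and the $1/n$-net property (\ref{eq12}); cancelling $h_n^*(z_0)\to h^*(z_0)$ gives $\rho_{f_n^*}\to\rho_{f^*}$. This two-sided estimate is equally valid and avoids both the monotonicity comparison and the explicit quantized selector, at the cost of the extra (easy) uniform-in-$a$ convergence step; the paper's route needs only the one-sided estimate because monotonicity supplies the other inequality. One small imprecision: for propagating compact-uniform convergence through $F$, continuity of $u\mapsto\lambda(u)$ in $w$-norm is not the right tool, since $u_n^t\to u^t$ only uniformly on compacts; what is needed is $\lambda(u_n^t)\to\lambda(u^t)$, which the paper gets by dominated convergence using $\lambda(w)<\infty$ (Lemma~\ref{lemma2}). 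Your parenthetical reformulation via the sub-stochastic kernel $\hat{\eta}=\eta-\lambda(\cdot)\phi$ of Lemma~\ref{lemma_aux} sidesteps this entirely (the tail bound and continuity transfer from $\eta$ to $\hat{\eta}$), so no genuine gap remains.
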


Let us define $u^0 = u_n^0 = 0$, and $u^{t+1} = F u^t$ and $u_n^{t+1} = F_n u_n^t$ for $t\geq1$; that is, $\{u^t\}_{t\geq1}$ and $\{u_n^t\}_{t\geq1}$ are successive approximations to $h^*$ and $h_n^*$, respectively.
Lemma~\ref{nlemma2} implies that $u^t$ and $u^t_n$ are in $C_w(\sZ)$ for all $t$ and $n$.

\begin{lemma}\label{lemma_aux}
For all $u, v \in C_w(\sZ)$ and $n\geq1$, the following results hold: (i) if $u \leq v$, then $F u \leq F v$ and $F_n u \leq F_n v$; (ii) $F u \leq F_n u$.
\end{lemma}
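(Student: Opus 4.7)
The plan for part (i) is to exploit Assumption~\ref{as2}(f), which asserts that $\eta(D|z,a) \geq \lambda(D)\phi(z,a)$ for all $(z,a)$ and all Borel $D$. This inequality says exactly that the set function
\begin{align}
\mu(D|z,a) \coloneqq \eta(D|z,a) - \lambda(D)\phi(z,a) \nonumber
\end{align}
is a nonnegative (finite) measure on $\sZ$ for every fixed $(z,a)$. First I would observe that, for any $u \in C_w(\sZ)$, the integrand in the definition of $F$ can be rewritten as
\begin{align}
c(z,a) + \int_{\sZ} u(y)\,\eta(dy|z,a) - \lambda(u)\phi(z,a) = c(z,a) + \int_{\sZ} u(y)\,\mu(dy|z,a), \nonumber
\end{align}
and the same rewriting applies to $F_n u(z)$ with the minimum taken over $\Lambda_n$ instead of $\sA$.

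Given this representation, the monotonicity assertion in (i) becomes immediate: if $u \leq v$ pointwise, then since $\mu(\,\cdot\,|z,a)$ is a positive measure, $\int u(y)\,\mu(dy|z,a) \leq \int v(y)\,\mu(dy|z,a)$ for every $(z,a)$. Adding $c(z,a)$ preserves the inequality pointwise in $a$, and taking the minimum over any nonempty set (either $\sA$ or $\Lambda_n$) of two pointwise-ordered functions of $a$ preserves the ordering. Hence $Fu(z) \leq Fv(z)$ and $F_n u(z) \leq F_n v(z)$ for every $z \in \sZ$.

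For part (ii), the argument is purely combinatorial. Since $\Lambda_n \subset \sA$, for each fixed $z \in \sZ$ and each $u \in C_w(\sZ)$ the minimum of the common bracketed expression $c(z,a) + \int u(y)\,\eta(dy|z,a) - \lambda(u)\phi(z,a)$ taken over the smaller set $\Lambda_n$ can only be larger than or equal to the minimum taken over $\sA$. Therefore $Fu(z) \leq F_n u(z)$ for every $z$, as claimed.

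No real obstacle is anticipated: part (i) hinges on recognizing that (f) is precisely the condition that turns the signed combination $\eta - \lambda\phi$ into a positive kernel, after which monotonicity is automatic, and part (ii) is just the elementary fact that restricting the feasible set in a minimization cannot decrease the minimum.
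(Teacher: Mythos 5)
Your proof is correct and follows essentially the same route as the paper: both rewrite $F$ and $F_n$ in terms of the nonnegative (sub-stochastic) kernel $\eta(\,\cdot\,|z,a) - \lambda(\,\cdot\,)\phi(z,a)$, whose positivity is exactly Assumption~\ref{as2}-(f), and then deduce (i) from monotonicity of integration against a positive measure and (ii) from minimizing over the smaller set $\Lambda_n \subset \sA$. No gaps.
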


\begin{proof}
Define the sub-stochastic kernel $\hat{\eta}$ by letting
\begin{align}
\hat{\eta}(\,\cdot\,|z,a) \coloneqq \eta(\,\cdot\,|z,a) - \lambda(\,\cdot\,) \phi(z,a). \nonumber
\end{align}
Using $\hat{\eta}$, $F$ and $F_n$ can be written as
\begin{align}
F u(z) &\coloneqq \min_{a \in \sA} \biggl[ c(z,a) + \int_{\sZ} u(y) \hat{\eta}(dy|z,a) \biggr], \nonumber \\
F_n u(z) &\coloneqq \min_{a \in \Lambda_n} \biggl[ c(z,a) + \int_{\sZ} u(y) \hat{\eta}(dy|z,a) \biggr]. \nonumber
\end{align}
Then the results follow from the fact that $\hat{\eta}(\,\cdot\,|z,a) \geq 0$  by Assumption~\ref{as2}-(f).
\end{proof}

Lemma~\ref{lemma_aux} implies that $u^0 \leq u^1 \leq u^2 \leq \ldots \leq h^*$ and $u_n^0 \leq u_n^1 \leq u_n^2 \leq \ldots \leq h_n^*$. Note that $\|u^1\|_w, \|u^1_n\|_w \leq M$ by Assumption~\ref{as1}-(d). Since
\begin{align}
\|h^*\|_w &\leq \|h^* - u^1\|_w + \|u^1\|_w = \|F h^* - F u^0\|_w + \|u^1\|_w \leq \alpha \|h^*\|_w + \|u^1\|_w \nonumber \\
\|h_n^*\|_w &\leq \|h_n^* - u_n^1\|_w + \|u_n^1\|_w = \|F_n h_n^* - F_n u_n^0\|_w + \|u_n^1\|_w \leq \alpha \|h_n^*\|_w + \|u_n^1\|_w, \nonumber
\end{align}
we have
\begin{align}
u^t(z) &\leq h^*(z) \leq M \frac{w(z)}{1-\alpha}, \nonumber \\
\intertext{and}
u_n^t(z) &\leq h_n^*(z) \leq M \frac{w(z)}{1-\alpha}. \nonumber
\end{align}
By inequalities above and the facts $\|u^t - h^*\|_w \leq \alpha^t \|h\|_w$ and $\|u_n^t - h_n^*\|_w \leq \alpha^t \|h_n\|_w$, we also have
\begin{align}
\|u^t - h^*\|_w &\leq M \frac{\alpha^t}{1-\alpha}, \nonumber \\
\intertext{and}
\|u_n^t - h_n^*\|_w &\leq M \frac{\alpha^t}{1-\alpha}. \nonumber
\end{align}
By Lemma~\ref{lemma_aux}, for each $n$ and $v \in C_w(\sZ)$, we have $F v \leq F_n v$. Therefore, by the monotonicity of $F$ and the fact $u^0 = u_n^0 =0$, we have
\begin{align}
u^t &\leq u_n^t \nonumber \\
h^* &\leq h_n^*, \label{aux50}
\end{align}
for all $t$ and $n$.

\begin{lemma}\label{lemma2}
For any compact set $K\subset\sZ$ and $t\geq1$, we have
\begin{align}
\lim_{n\rightarrow\infty} \sup_{z\in K} |u_n^t(z) - u^t(z)| = 0. \label{neq40}
\end{align}
\end{lemma}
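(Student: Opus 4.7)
The plan is to mimic the proof of Lemma~\ref{lemma1}, inducting on $t\ge 1$, with the main new difficulty coming from the $\lambda(u)\phi(z,a)$ term that distinguishes $F$ and $F_n$ from the discounted Bellman operators $T$ and $T_n$. For the base case $t=1$, since $u^0=u_n^0=0$ and hence $\lambda(u^0)=\lambda(u_n^0)=0$, we have $u^1(z)=\min_{a\in\sA}c(z,a)$ and $u_n^1(z)=\min_{a\in\Lambda_n}c(z,a)$; uniform convergence on any compact $K$ follows from the uniform continuity of $c$ on $K\times\sA$ and the fact that $\Lambda_n$ is a $1/n$-net in $\sA$.

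For the inductive step, fix a compact $K\subset\sZ$ and assume the claim holds on every compact subset of $\sZ$ at stage $t$. Choose a measurable selector $f^*_t\in\rF$ of $Fu^t=u^{t+1}$, which exists because $c$, $\phi$, and $(z,a)\mapsto\int_\sZ u^t(y)\eta(dy|z,a)$ are continuous (the last by Lemma~\ref{nlemma1}) and $\sA$ is compact, and set $f^*_{t,n}\coloneqq\Upsilon_n(f^*_t)$, so that $f^*_{t,n}\to f^*_t$ uniformly by (\ref{neq3}). By Lemma~\ref{lemma_aux} applied iteratively, $u^t\le u_n^t$ for every $t$, so the absolute value in (\ref{neq40}) may be dropped. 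Bounding $u_n^{t+1}(z)-u^{t+1}(z)$ by substituting $f^*_{t,n}$ into the $F_n$-minimization and $f^*_t$ into the $F$-minimization yields three error terms to control uniformly in $z\in K$: a cost difference $|c(z,f^*_{t,n}(z))-c(z,f^*_t(z))|$, a kernel difference $\bigl|\int_\sZ u_n^t(y)\eta(dy|z,f^*_{t,n}(z))-\int_\sZ u^t(y)\eta(dy|z,f^*_t(z))\bigr|$, and a new term $|\lambda(u_n^t)\phi(z,f^*_{t,n}(z))-\lambda(u^t)\phi(z,f^*_t(z))|$.

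The first two terms are dispatched exactly as in Lemma~\ref{lemma1}: the first by uniform continuity of $c$ on $K\times\sA$ combined with $f^*_{t,n}\to f^*_t$ uniformly, and the second by Lemma~\ref{lemma3} applied to $\{u_n^t\}$, whose $w$-norms are uniformly bounded by $M/(1-\alpha)$. The new third term is the principal obstacle, and I would split it as $|\lambda(u_n^t)-\lambda(u^t)|\,\phi(z,f^*_{t,n}(z))+\lambda(u^t)\,|\phi(z,f^*_{t,n}(z))-\phi(z,f^*_t(z))|$. The second summand vanishes uniformly on $K$ by uniform continuity of $\phi$ on the compact set $K\times\sA$, since $\lambda(u^t)<\infty$ (because $w$ is $\lambda$-integrable). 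For the first summand, the induction hypothesis applied to every singleton (which is compact) yields pointwise convergence $u_n^t(z)\to u^t(z)$ on all of $\sZ$; combined with the uniform domination $|u_n^t-u^t|\le 2Mw/(1-\alpha)$ and the $\lambda$-integrability of $w$ from Assumption~\ref{as2}-(g), dominated convergence gives $\lambda(u_n^t)\to\lambda(u^t)$, while $\phi(z,f^*_{t,n}(z))$ stays uniformly bounded on $K$ by continuity of $\phi$ on the compact set $K\times\sA$. The essential new ingredient beyond the discounted case is precisely Assumption~\ref{as2}-(g): without $\lambda$-integrability of $w$, uniform convergence on compact sets would not suffice to conclude convergence of the global integrals $\lambda(u_n^t)$, so this is the only step where the average-cost argument genuinely departs from the proof of Lemma~\ref{lemma1}.
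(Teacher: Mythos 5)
Your proof is correct and follows essentially the same route as the paper: the paper's own proof simply notes that $\lambda(u_n^t)\rightarrow\lambda(u^t)$ by dominated convergence together with $\lambda(w)<\infty$ (Assumption~\ref{as2}-(g)) and then refers to the induction argument of Lemma~\ref{lemma1}, which is exactly the structure you spell out, including the selector substitution, the use of Lemma~\ref{lemma3} with the uniform bound $M/(1-\alpha)$, and the treatment of the extra $\lambda(u)\phi$ term. Your write-up just makes explicit the details the paper omits.
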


\begin{proof}
Note that for each $t\geq1$, by the dominated convergence theorem and $\lambda(w) < \infty$, we have $\lambda(u_n^t) \rightarrow \lambda(u^t)$ if $u^t_n \rightarrow u^t$ pointwise. The proof can be finished using the same arguments as in the proof of Lemma~\ref{lemma1} and so we omit the details.
\end{proof}

\begin{lemma}\label{lemma4}
For any compact set $K\subset\sZ$, we have
\begin{align}
\lim_{n\rightarrow\infty} \sup_{z\in K} |h_n^*(z) - h^*(z)| = 0. \nonumber
\end{align}
\end{lemma}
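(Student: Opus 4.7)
The plan is to mirror the argument used for Theorem~\ref{thm2}. The idea is to approximate both $h^*$ and $h_n^*$ by the successive approximations $u^t$ and $u_n^t$, exploit Lemma~\ref{lemma2} to control the difference $u_n^t - u^t$ uniformly on the compact set $K$ for each fixed $t$, and then send $t \to \infty$ to make the contraction errors vanish.

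Concretely, I would fix a compact set $K \subset \sZ$ and set $W_K \coloneqq \sup_{z \in K} w(z) < \infty$, which is finite by continuity of $w$. Applying the triangle inequality to
\begin{align}
|h_n^*(z) - h^*(z)| \leq |h_n^*(z) - u_n^t(z)| + |u_n^t(z) - u^t(z)| + |u^t(z) - h^*(z)| \nonumber
\end{align}
and using the $w$-norm bounds $\|h_n^* - u_n^t\|_w \leq M \alpha^t/(1-\alpha)$ and $\|u^t - h^*\|_w \leq M \alpha^t/(1-\alpha)$ (established just before Lemma~\ref{lemma2}), together with $w(z) \leq W_K$ on $K$, yields
\begin{align}
\sup_{z \in K} |h_n^*(z) - h^*(z)| \leq 2 W_K M \frac{\alpha^t}{1-\alpha} + \sup_{z \in K} |u_n^t(z) - u^t(z)|. \nonumber
\end{align}

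Now I would take $n \to \infty$ while keeping $t$ fixed: by Lemma~\ref{lemma2}, the last term vanishes, leaving $\limsup_{n \to \infty} \sup_{z \in K} |h_n^*(z) - h^*(z)| \leq 2 W_K M \alpha^t/(1-\alpha)$. Since $\alpha \in (0,1)$ and $t \geq 1$ is arbitrary, letting $t \to \infty$ finishes the proof.

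I do not anticipate any substantive obstacle here; the hard work has already been done in Lemma~\ref{lemma2} (which itself relies on the inductive argument of Lemma~\ref{lemma1} adapted to $F$ and $F_n$, using the additional fact that $\lambda(u_n^t) \to \lambda(u^t)$ via dominated convergence because $w$ is $\lambda$-integrable by Assumption~\ref{as2}(g)). The only point worth double-checking is that the uniform $w$-norm bounds on $\|h^* - u^t\|_w$ and $\|h_n^* - u_n^t\|_w$ are independent of $n$, which is indeed the case since both $F$ and $F_n$ are contractions with the same modulus $\alpha$ and $\|u^1\|_w, \|u_n^1\|_w \leq M$ uniformly in $n$.
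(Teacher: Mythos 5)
Your proposal is correct and is essentially the paper's own argument: the paper proves Lemma~\ref{lemma4} by repeating the proof of Theorem~\ref{thm2} with $u^t$, $u_n^t$, $F$, $F_n$, and modulus $\alpha$ in place of $v^t$, $v_n^t$, $T$, $T_n$, and $\sigma$, which is exactly your triangle-inequality decomposition combined with the contraction bounds and Lemma~\ref{lemma2}. The only cosmetic difference is that you bound $w$ by $W_K$ on $K$ whereas the paper divides by $w(z)$ and uses $w \geq 1$; the two are equivalent.
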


\begin{proof}
The lemma can be proved using the same arguments as in the proof of Theorem~\ref{thm2}.
\end{proof}

Now, using Lemma~\ref{lemma4} we prove Theorem~\ref{mainthm2}

\begin{proof}[Proof of Theorem~\ref{mainthm2}]
Recall that $V^* = \rho_{f*}$ and $V^*_n = \rho_{f_n^*}$, and they satisfy the following ACOEs:
\begin{align}
h^*(z) + \rho_{f^*} &= \min_{a \in \sA} \biggl[c(z,a) + \int_{\sZ} h^*(y) \eta(dy|z,a) \biggr] = c(z,f^*(z)) + \int_{\sZ} h^*(y) \eta(dy|z,f^*(z)) \nonumber \\
h_n^*(z) + \rho_{f_n^*} &= \min_{a \in \Lambda_n} \biggl[c(z,a) + \int_{\sZ} h_n^*(y) \eta(dy|z,a) \biggr] = c(z,f_n^*(z)) + \int_{\sZ} h_n^*(y) \eta(dy|z,f_n^*(z)). \nonumber
\end{align}
Note that $h^*_n \geq h^*$ (see (\ref{aux50})) and $\rho_{f_n^*} \geq \rho_{f^*}$. For each $n$, let $f_n \coloneqq \Upsilon_n(f^*)$. Then for any $z \in \sZ$ we have
\begin{align}
\limsup_{n\rightarrow\infty} \bigl( h_n^*(z) + \rho_{f_n^*} \bigr) &= \limsup_{n\rightarrow\infty} \biggl( \min_{a \in \Lambda_n} \biggl[c(z,a) + \int_{\sZ} h_n^*(y) \eta(dy|z,a) \biggr] \biggr) \nonumber \\
&= \limsup_{n\rightarrow\infty} \biggl(c(z,f_n^*(z)) + \int_{\sZ} h_n^*(y) \eta(dy|z,f_n^*(z)) \biggr) \nonumber \\
&\leq \limsup_{n\rightarrow\infty} \biggl( c(z,f_n(z)) + \int_{\sZ} h_n^*(y) \eta(dy|z,f_n(z)) \biggr) \nonumber \\
&= c(z,f^*(z)) + \int_{\sZ} h^*(y) \eta(dy|z,f^*(z)) \label{auxxx3} \\
&= h^*(z) + \rho_{f^*} \nonumber \\
&\leq \liminf_{n\rightarrow\infty} \bigl( h_n^*(z) + \rho_{f_n^*} \bigr), \nonumber
\end{align}
where (\ref{auxxx3}) follows from Lemma~\ref{lemma3} and the fact that $h^*_n$ converges to $h^*$ uniformly on any compact subset $K$ of $\sZ$ and $\sup_{n} \|h_n^*\|_w \leq \frac{M}{1-\alpha}$. Since $\lim_{n\rightarrow \infty} h_n^*(z) = h^*(z)$ by Lemma~\ref{lemma4}, we have $\lim_{n\rightarrow \infty} \rho_{f_n^*} = \rho_{f^*}$. This completes the proof.
\end{proof}

\section{Application to Partially Observed MDPs}
\label{sec3}

In this section we apply the result obtained in Section~\ref{sec2} to partially observed Markov decision processes (POMDPs). Consider a discrete time POMDP with state space $\sX$, action space $\sA$, and observation space $\sY$, all Borel spaces. Let $p(\,\cdot\,|x,a)$ denote the transition probability of the next state given the current state-action pair is $(x,a)$, and let $r(\,\cdot\,|x)$ denote the transition probability of the current observation given the current state variable $x$. The one-stage cost function, denoted by $\tilde{c}$, is again a measurable function from $\sX \times \sA$ to $[0,\infty)$.

Define the history spaces
$\tilde{\sH}_{t}=(\sY\times\sA)^{t}\times\sY$, $t=0,1,2,\ldots$ endowed with their
product Borel $\sigma$-algebras generated by $\B(\sY)$ and $\B(\sA)$. A
\emph{policy} $\pi=\{\pi_{t}\}$ is a sequence of stochastic kernels
on $\sA$ given $\tilde{\sH}_{t}$. We denote by $\Pi$ the set of all policies.
Hence, for any initial distribution $\mu$ and policy $\pi$ we can think of POMDP as a stochastic process $\bigl\{ x_t,y_t,a_t \bigr\}_{t\geq0}$ defined on a probability space $\bigl( \Omega, \B(\Omega), P_{\mu}^{\pi} \bigr)$ where $\Omega = \tilde{\sH}_{\infty} \times \sX^{\infty}$, $x_t$ is a $\sX$-valued random variable, $y_t$ is a $\sY$-valued random variable, $a_t$ is a $\sA$-valued random variable, and $P_{\mu}^{\pi}$-almost surely they satisfy
\begin{align}
P_{\mu}^{\pi}(x_0\in\,\cdot\,)&=\mu(\,\cdot\,)\nonumber \\*
P_{\mu}^{\pi}(x_t\in\,\cdot\,|x_{[0,t-1]},y_{[0,t-1]},a_{[0,t-1]})&=P_{\mu}^{\pi}(x_t\in\,\cdot\,|x_{t-1},a_{t-1})=p(\,\cdot\,|x_{t-1},a_{t-1}) \nonumber \\
P_{\mu}^{\pi}(y_t\in\,\cdot\,|x_{[0,t]},y_{[0,t-1]},a_{[0,t-1]})&=P_{\mu}^{\pi}(y_t\in\,\cdot\,|x_{t})=r(\,\cdot\,|x_{t}) \nonumber \\
P_{\mu}^{\pi}(a_t\in\,\cdot\,|x_{[0,t]},y_{[0,t]},a_{[0,t-1]})&=\pi_t(\,\cdot\,|y_{[0,t]},a_{[0,t-1]}) \nonumber
\end{align}
where $x_{[0,t]}=(x_0,\ldots,x_t)$, $y_{[0,t]}=(y_0,\ldots,y_t)$, and $a_{[0,t]}=(a_0,\ldots,a_{t})$ ($t\geq1$).
Let $\tilde{J}(\pi,\mu)$ denote the discounted cost function of the policy $\pi \in \Pi$ with initial distribution $\mu$ of the POMDP.

It is known that any POMDP can be reduced to a (completely observable) MDP \cite{Yus76}, \cite{Rhe74}, whose states are the posterior state distributions or beliefs of the observer; that is, the state at time $t$ is
\begin{align}
\sPr\{x_{t} \in \,\cdot\, | y_0,\ldots,y_t, a_0, \ldots, a_{t-1}\} \in \P(\sX). \nonumber
\end{align}
We call this equivalent MDP the belief-MDP. The belief-MDP has state space $\sZ = \P(\sX)$ and action space $\sA$. The transition probability $\eta$ of the belief-MDP can be constructed as in Example~\ref{exm1} (see also \cite{Her89})
\begin{align}
\eta(\,\cdot\,|z,a) = \int_{\sY} 1_{\{F(z,a,y) \in \,\cdot\,\}} H(dy|z,a), \nonumber
\end{align}
where $F(z,a,y) \coloneqq \Pr\{x_{t+1} \in \,\cdot\, | z_t = z, a_t = a, y_{t+1} = y\}$, $H(\,\cdot\, | z,a) \coloneqq \Pr\{y_{t+1} \in \,\cdot\, | z_t = z, a_t = a\}$, and $z_t$ denotes the posterior distribution of the state $x_t$ given the past observations.
The one-stage cost function $c$ of the belief-MDP is given by
\begin{align}
c(z,a) \coloneqq \int_{\sX} \tilde{c}(x,a) z(dx). \label{eq8}
\end{align}
Hence, the belief-MDP is a Markov decision process with the components $(\sZ,\sA,\eta,c)$.

For the belief-MDP define the history spaces $\sH_{t}=(\sZ\times\sA)^{t}\times\sZ$, $t=0,1,2,\ldots$ as in Section~\ref{sec1}. Again, $\Phi$ denotes the set of all policies for the belief-MDP, where the policies are defined in an usual manner. Let $J(\varphi,\xi)$ denote the discounted cost function of policy $\varphi \in \Phi$ for initial distribution $\xi$ of the belief-MDP.

Notice that any history vector $h_t = (z_0,\ldots,z_t,a_0,\ldots,a_{t-1})$ of the belief-MDP is a function of the history vector $\tilde{h}_t = (y_0,\ldots,y_t,a_0,\ldots,a_{t-1})$ of the POMDP. Let us write this relation as
$i(\tilde{h}_t) = h_t$. 
Hence, for a policy $\varphi = \{\varphi_t\} \in \Phi$, we can define a policy $\pi^{\varphi} = \{\pi_t^{\varphi}\} \in \Pi$ as
\begin{align}
\pi_t^{\varphi}(\,\cdot\,|\tilde{h}_t) \coloneqq \varphi_t(\,\cdot\,|i(\tilde{h}_t)). \nonumber
\end{align}
Let us write this as a mapping from $\Phi$ to $\Pi$: $\Phi \ni \varphi \mapsto i(\varphi) = \pi^{\varphi} \in \Pi$. It is straightforward to show that the cost functions $J(\varphi,\xi)$ and $\tilde{J}(\pi^{\varphi},\mu)$ are the same. One can also prove that (see \cite{Yus76}, \cite{Rhe74})
\begin{align}
\inf_{\varphi \in \Phi} J(\varphi,\xi) &= \inf_{\pi \in \Pi} \tilde{J}(\pi,\mu) \label{eq7}
\end{align}
and furthermore, that if $\varphi$ is an optimal policy for belief-MDP, then $\pi^{\varphi}$ is optimal for the POMDP as well. Hence, the POMDP and the corresponding belief-MDP are equivalent in the sense of cost minimization. We will impose the following assumptions on the components of the original POMDP.

\begin{assumption}
\label{as3}
\item [(a)] The one stage cost function $\tilde{c}$ is continuous and bounded.
\item [(b)] The stochastic kernel $p(\,\cdot\,|x,a)$ is weakly continuous in $(x,a) \in \sX \times \sA$.
\item [(c)] The stochastic kernel $r(\,\cdot\,|x)$ is continuous in total variation, i.e., if $x_k \rightarrow x$, then $r(\,\cdot\,|x_k) \rightarrow r(\,\cdot\,|x)$ in total variation.
\item [(d)] $\sA$ is compact.
\end{assumption}

We refer the reader to \cite[Section 8]{FeKaZg14} for examples satisfying Assumption~\ref{as3}-(c). Note that by \cite[Proposition 7.30]{BeSh78}, the one stage cost function $c$, which is defined in (\ref{eq8}), is in $C_b(\sZ\times\sA)$ under Assumption \ref{as3}-(a)(b). Hence, the belief-MDP satisfies the conditions in Theorem~\ref{thm2} for $w=1$ if $\eta$ is weakly continuous. The following theorem is a consequence of \cite[Theorem 3.7, Example 4.1]{FeKaZg14} and Example~\ref{exm1}.

\begin{theorem}\label{thm6}
\begin{itemize}
\item [ ]
\item [(i)] Under Assumption \ref{as3}-(b)(c), the stochastic kernel $\eta$ for belief-MDP is weakly continuous in $(z,a)$.
\item [(ii)] If we relax the continuity of the observation channel in total variation to setwise or weak continuity, then $\eta$ may not be weakly continuous even if the transition probability $p$ of POMDP is continuous in total variation.
\item [(iii)] Finally, $\eta$ may not be setwise continuous in $a$, even if the observation channel is continuous in total variation.
\end{itemize}
\end{theorem}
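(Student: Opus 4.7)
The plan is to establish the three claims by directly invoking the corresponding results of Feinberg, Kasyanov and Zgurovsky together with the belief-MDP construction already used in Example~\ref{exm1}; no substantial new computation is required, only verification that the hypotheses match.

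For part (i), I would check that the assumptions of \cite[Theorem~3.7]{FeKaZg14} are satisfied by the POMDP under Assumption~\ref{as3}-(b)(c). Specifically, that theorem asserts that whenever the state transition kernel $p(\,\cdot\,|x,a)$ is weakly continuous in $(x,a)$ and the observation kernel $r(\,\cdot\,|x)$ is continuous in total variation in $x$, the transition kernel $\eta(\,\cdot\,|z,a)$ of the associated belief-MDP, written as the expectation of $\delta_{F(z,a,y)}$ against $H(dy|z,a)$, is weakly continuous in $(z,a)\in\P(\sX)\times\sA$. Since these are exactly Assumption~\ref{as3}-(b) and (c), part (i) follows.

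For part (ii), the plan is to quote the counterexamples in \cite[Example~4.1]{FeKaZg14}, which exhibit POMDPs with $p$ even continuous in total variation but with $r$ only setwise (respectively weakly) continuous, for which the induced belief kernel $\eta$ is shown to fail weak continuity. Thus one cannot weaken the total variation hypothesis on $r$ in part (i).

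For part (iii), I would reuse Example~\ref{exm1} verbatim. There the noise $\{v_t\}$ is uniform on $[0,1]$, which makes $r(\,\cdot\,|x)$ continuous in total variation in $x$ (indeed, the law of $x+v_t$ depends continuously in total variation on $x$ because translation of an absolutely continuous distribution is continuous in total variation). The example already establishes that $\eta(\,\cdot\,|\delta_0,a_k)\not\to\eta(\,\cdot\,|\delta_0,0)$ setwise when $a_k=1/k$. This is exactly the conclusion of (iii).

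The only nontrivial item to justify carefully is the total variation continuity of $r$ in Example~\ref{exm1}; the rest is an application of named results, so I expect no real obstacle. I would record this justification in one line (translation of a Lebesgue-absolutely-continuous law is continuous in total variation by approximating the density in $L^1$ by continuous functions of compact support), and otherwise the proof reduces to citing \cite[Theorem~3.7, Example~4.1]{FeKaZg14} and Example~\ref{exm1}.
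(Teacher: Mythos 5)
Your proposal is correct and follows essentially the same route as the paper, which derives (i) and (ii) directly from \cite[Theorem 3.7, Example 4.1]{FeKaZg14} and (iii) from Example~\ref{exm1}. Your extra one-line justification that the observation channel $y=x+v$ with $v$ uniform on $[0,1]$ is continuous in total variation (translation continuity of an absolutely continuous law) is exactly the fact the paper asserts without proof in Example~\ref{exm1}, so there is no gap.
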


Part(i) of Theorem~\ref{thm6} implies that belief-MDP satisfies conditions in Theorem~\ref{thm2}. However, note that continuity of the observation channel in total variation in Assumption~\ref{as3} cannot be relaxed to weak or setwise continuity. On the other hand, the continuity of the observation channel in total variation is not enough for the setwise continuity of $\eta$. Hence, results in \cite{SaLiYu13-2} cannot be applied to the POMDP we consider even though we put a fairly strong condition on the observation channel.

\begin{theorem}
\label{thm5}
Suppose Assumption~\ref{as3} holds for the POMDP. Then we have
\begin{align}
\lim_{n\rightarrow\infty} |J^*_n(z) - J^*(z)| = 0 \text{  } \text{ for all $z\in\sZ$}, \nonumber
\end{align}
where $J^*_n$ is the discounted value function of the belief-MDP with the components $\bigl\{ \sZ, \Lambda_n,\eta,c \bigr\}$ and
$J^*$ is the discounted value function of the belief-MDP with the components $\bigl\{ \sZ, \sA,\eta,c \bigr\}$.
\end{theorem}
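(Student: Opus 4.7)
The proof plan is to reduce Theorem 5.3 to Theorem 3.2 by verifying that the belief-MDP with components $\bigl\{ \sZ,\sA,\eta,c \bigr\}$ satisfies Assumption 3.1 with the choice $w\equiv 1$. Once this is established, Theorem 3.2 applied to the belief-MDP (which by (\ref{eq7}) has the same optimal discounted value as the original POMDP) yields exactly the claimed pointwise convergence $\lim_{n\to\infty}|J_n^*(z)-J^*(z)|=0$ for all $z\in\sZ$.

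The verification proceeds condition by condition. For (a), one uses the remark just preceding the theorem: the belief-MDP cost (\ref{eq8}) belongs to $C_b(\sZ\times\sA)$ by \cite[Proposition 7.30]{BeSh78} under Assumption \ref{as3}-(a)(b), and is nonnegative since $\tilde c\geq 0$. For (b), the weak continuity of $\eta$ in $(z,a)$ is precisely the content of Theorem~\ref{thm6}(i), which is in turn inherited from \cite[Theorem 3.7]{FeKaZg14} using Assumption \ref{as3}-(b)(c). Condition (c) is immediate from Assumption \ref{as3}-(d). For (d), taking $w\equiv 1$ and $M=\sup_{(x,a)}\tilde c(x,a)<\infty$ (finite by Assumption \ref{as3}-(a)) gives $\sup_{a}c(z,a)\leq M$; moreover, $\int_{\sZ}w(y)\,\eta(dy|z,a)=1$, which satisfies (\ref{eq2}) with any $\alpha\in[1,1/\beta)$ (e.g., $\alpha=1$) and is trivially continuous in $(z,a)$.

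Having verified all four conditions of Assumption \ref{as1} for the belief-MDP, Theorem~\ref{thm2} applies directly and yields the conclusion, since $J^*$ and $J^*_n$ in the statement of Theorem~\ref{thm5} coincide with the discounted value functions of the original and quantized belief-MDPs, respectively. I do not anticipate any serious obstacle here: all the heavy lifting has been done in Section~\ref{sec2} (to establish Theorem~\ref{thm2} under weak continuity of $\eta$ and an unbounded cost with weight $w$) and in \cite{FeKaZg14} (to propagate weak continuity from the POMDP components to the belief-MDP kernel $\eta$). The only mild delicacy worth flagging is the choice of weight: because the POMDP cost $\tilde c$ is assumed bounded, the belief-MDP cost $c$ is bounded, so the constant weight $w\equiv 1$ suffices and no separate Lyapunov-type drift condition on $\eta$ needs to be verified — this is precisely why Assumption \ref{as3}-(a) is adequate without any analogue of Assumption \ref{as1}-(d)'s unbounded component.
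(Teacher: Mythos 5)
Your proposal is correct and follows exactly the route the paper intends: verify Assumption~\ref{as1} for the belief-MDP with $w\equiv 1$ (cost in $C_b(\sZ\times\sA)$ via \cite[Proposition 7.30]{BeSh78}, weak continuity of $\eta$ via Theorem~\ref{thm6}(i), compact $\sA$, and the drift condition trivial with $\alpha=1$), then apply Theorem~\ref{thm2}. This is precisely how the paper justifies Theorem~\ref{thm5}, so nothing further is needed.
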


The significance of Theorem~\ref{thm5} is reinforced by the following observation. If we define $D\Pi\Q(\Lambda_n)$ as the set of deterministic policies in $\Pi$ taking values in $\Lambda_n$, then the above theorem implies that for any given $\varepsilon>0$ there exists $n\geq1$ and $\pi^{*} \in D\Pi\Q(\Lambda_n)$ such that
\begin{align}
\tilde{J}(\pi^{*},\mu) < \min_{\pi \in \Pi} \tilde{J}(\pi,\mu) + \varepsilon, \nonumber
\end{align}
where $\pi^{*} = \pi^{\varphi^{*}}$. This means that even when is an information transmission constraint from the controller to the plant, one might get $\varepsilon$-close to the value function for any small $\varepsilon$ by quantizing the controller's actions and sending the encoded levels.

\section{Discussion}
\label{sec4}

In this paper, we considered the finite-action approximation of stationary policies for a discrete-time Markov decision process with either discounted or average costs. Under mild weak continuity assumptions it was shown that if one uses a sufficiently large number
of points to discretize the action space, then the resulting finite-action MDP can approximate the original model with arbitrary precision.
The results obtained for the discounted cost were also applied to the finite-action approximation problem for POMDPs.

One direction for future work is to further investigate the problem \textbf{(P)} for the average cost under specific conditions for POMDPs, so that the results obtained for the average cost can be applicable to the belief-MDPs. In this case, a possible solution methodology is to investigate conditions on the POMDP under which the Markov chain arising from the belief-MDP with a stationary policy is ergodic and hence has a unique invariant measure.

\bibliographystyle{elsarticle-num}
\bibliography{references}

\begin{thebibliography}{10}
\expandafter\ifx\csname url\endcsname\relax
  \def\url#1{\texttt{#1}}\fi
\expandafter\ifx\csname urlprefix\endcsname\relax\def\urlprefix{URL }\fi
\expandafter\ifx\csname href\endcsname\relax
  \def\href#1#2{#2} \def\path#1{#1}\fi

\bibitem{YuBa13}
S.~Y\"uksel, T.~Ba\c{s}ar, Stochastic Networked Control Systems: Stabilization
  and Optimization under Information Constraints, Boston, MA, Birkhauser, 2013.

\bibitem{SaYuLi14}
N.~Saldi, S.~Y\"uksel, T.~Linder, Asymptotic optimality of finite
  approximations to {M}arkov decision processes with {B}orel spaces,
  arXiv:1503.02244 (2015).

\bibitem{LaKo07}
L.~Gy\"{o}rfi, M.~Kohler, Nonparametric estimation of conditional
  distributions, IEEE Trans. Inf. Theory 53 (2007) 1872--1878.

\bibitem{Lan81}
H.~Langen, Convergence of dynamic programming models, Math. Oper. Res. 6~(4)
  (1981) 493--512.

\bibitem{SaLiYu13-2}
N.~Saldi, T.~Linder, S.~Y\"uksel, Asymtotic optimality and rates of convergence
  of quantized stationary policies in stochastic control, IEEE Trans. Autom.
  Control 60~(2) (2015) 553--558.

\bibitem{FeKaZg14}
E.~Feinberg, P.~Kasyanov, M.~Zgurovsky, Partially observable total-cost
  {M}arkov decision process with weakly continuous transition probabilities,
  arXiv:1401.2168 (2014).

\bibitem{HeLa99}
O.~Hern\'andez-Lerma, J.~Lasserre, Further Topics on Discrete-Time {M}arkov
  Control Processes, Springer, 1999.

\bibitem{HeLa03}
O.~Hern\'andez-Lerma, J.~Lasserre, {M}arkov Chains and Invariant Probabilities,
  Birkhauser, 2003.

\bibitem{HeLa96}
O.~Hern\'andez-Lerma, J.~Lasserre, Discrete-Time {M}arkov Control Processes:
  Basic Optimality Criteria, Springer, 1996.

\bibitem{FeKaZa12}
E.~Feinberg, P.~Kasyanov, N.~Zadioanchuk, Average cost {M}arkov decision
  processes with weakly continuous transition probabilities, Math. Oper. Res.
  37~(4) (2012) 591--607.

\bibitem{Bor02}
V.~Borkar, Convex analytic methods in {M}arkov decision processes, in:
  E.~Feinberg, A.~Shwartz (Eds.), Handbook of Markov Decision Processes, Kluwer
  Academic Publisher, 2002.

\bibitem{Her89}
O.~Hern\'andez-Lerma, Adaptive {M}arkov Control Processes, Springer-Verlag,
  1989.

\bibitem{BeSh78}
D.~P. Bertsekas, S.~E. Shreve, Stochastic optimal control: The discrete time
  case, Academic Press New York, 1978.

\bibitem{Bog07}
V.~Bogachev, Measure Theory: Volume II, Springer, 2007.

\bibitem{Veg03}
O.~Vega-Amaya, The average cost optimality equation: a fixed point approach,
  Bol. Soc. Mat. Mexicana 9~(3) (2003) 185--195.

\bibitem{GoHe95}
E.~Gordienko, O.~Hernandez-Lerma, Average cost {M}arkov control processes with
  weighted norms: Existence of canonical policies, Appl. Math. 23~(2) (1995)
  199--218.

\bibitem{Yus76}
A.~Yushkevich, Reduction of a controlled {M}arkov model with incomplete data to
  a problem with complete information in the case of {B}orel state and control
  spaces, Theory Prob. Appl. 21 (1976) 153--158.

\bibitem{Rhe74}
D.~Rhenius, Incomplete information in {M}arkovian decision models, Ann.
  Statist. 2 (1974) 1327--1334.

\end{thebibliography}

\end{document}